\newif\ifpreprint
\preprinttrue
\ifpreprint
  \documentclass[11pt]{article}
  \usepackage[margin=1in]{geometry}
  \usepackage{authblk}
\else
  \documentclass[a4paper,fleqn]{cas-sc}
\fi

\usepackage[backend=biber,style=numeric,
  natbib=true,
  date=year,
  uniquename=false,
  maxcitenames=99,
  maxbibnames=99,
  giveninits=true,
  url=false,
  eprint=false,
  sorting=none,
  doi=true,
  isbn=false
]{biblatex}

\DeclareFieldFormat[article]{title}{#1\isdot}
\DeclareFieldFormat[book]{title}{#1\isdot}
\DeclareFieldFormat[inbook]{title}{#1\isdot}
\DeclareFieldFormat[incollection]{title}{#1\isdot}
\renewbibmacro{in:}{}
\DeclareSourcemap{
  \maps[datatype=bibtex]{
    \map{
      \step[fieldset=urldate, null]
    }
    \map[overwrite]{
      \step[fieldsource=shortjournal]
      \step[fieldset=journaltitle, origfieldval]
    }
  }
}
\AtEveryBibitem{\clearfield{note}}

\usepackage{anyfontsize}
\usepackage{zref-clever}
\usepackage{csvsimple-l3}
\usepackage{booktabs}
\usepackage{array}
\usepackage{graphicx}
\usepackage{mathtools,amsmath,amsthm,amsfonts,amssymb}
\usepackage{enumitem}

\let\svVert\Vert
\usepackage{physics2}
\usephysicsmodule{ab.legacy}
\AtBeginDocument{\let\Vert\svVert}

\newcommand{\CC}{\mathbb{C}}
\newcommand{\NN}{\mathbb{N}}
\newcommand{\ZZ}{\mathbb{Z}}
\newcommand{\RR}{\mathbb{R}}

\DeclareMathOperator{\pl}{pl}
\DeclareMathOperator{\pt}{pt}
\DeclareMathOperator{\pc}{pc}
\DeclareMathOperator{\fpc}{fpc}
\DeclareMathOperator{\signop}{sign}
\DeclareMathOperator{\gr}{GR}

\renewcommand{\mid}{~|~}

\newcommand{\coef}[1]{c^{(#1)}}

\newcommand{\Rdft}{R_{(\mathrm{T})}}
\newcommand{\rdft}{r_{(\mathrm{T})}}
\newcommand{\ndft}{n_{(\mathrm{T})}}

\newcommand{\xeval}{x_{e}}
\newcommand{\xmin}{x_{\min}}
\newcommand{\xmax}{x_{\max}}
\newcommand{\xc}{x_{c}}
\newcommand{\xh}{x_{h}}
\newcommand{\Kg}{K_{G}}

\newcommand{\dd}[1]{\,\mathrm{d}#1}
\newcommand{\ctxprec}{\texttt{ctx.prec}}
\newcommand{\dlg}{Dandelin--Lobachevsky--Graeffe}

\newtheorem{theorem}{Theorem}[section]

\newtheorem{lemma}[theorem]{Lemma}
\newtheorem{remark}[theorem]{Remark}
\newtheorem{example}[theorem]{Example}

\zcRefTypeSetup{theorem}{name-sg={Theorem},name-pl={Theorems}}
\AddToHook{env/definition/begin}{\zcsetup{countertype={theorem=definition}}}
\zcRefTypeSetup{definition}{name-sg={Definition},name-pl={Definitions}}
\AddToHook{env/lemma/begin}{\zcsetup{countertype={theorem=lemma}}}
\zcRefTypeSetup{lemma}{name-sg={Lemma},name-pl={Lemmas}}
\AddToHook{env/remark/begin}{\zcsetup{countertype={theorem=remark}}}
\zcRefTypeSetup{remark}{name-sg={Remark},name-pl={Remarks}}
\AddToHook{env/example/begin}{\zcsetup{countertype={theorem=example}}}
\zcRefTypeSetup{example}{name-sg={Example},name-pl={Examples}}

\numberwithin{equation}{section}

\begin{document}
\let\WriteBookmarks\relax
\def\floatpagepagefraction{1}
\def\textpagefraction{.001}
\ifpreprint
  \emergencystretch=1.5em
  \setlength{\belowcaptionskip}{8pt}
  \renewcommand\Affilfont{\normalsize}
  \renewcommand\Authands{ and }
  \title{Generalized Pellet's theorem for analytic matrix-valued functions}
  \author[1]{Hiromochi Itoh}
  \author[2]{Kei Matsushima}
  \author[3]{Takayuki Yamada\thanks{Corresponding author. Email: t.yamada@mech.t.u-tokyo.ac.jp}}
  \affil[1]{Department of Mechanical Engineering, Graduate School of Engineering, The University of Tokyo, Yayoi 2-11-16, Bunkyo-ku, 113-8656 Tokyo, Japan}
  \affil[2]{Graduate School of Advanced Science and Engineering, Hiroshima University, 1-7-1 Kagamiyama, Higashi-Hiroshima, 739-8521 Hiroshima, Japan}
  \affil[3]{Department of Strategic Studies, Institute of Engineering Innovation, Graduate School of Engineering, The University of Tokyo, Yayoi 2-11-16, Bunkyo-ku, 113-8656 Tokyo, Japan}
  \date{}
\else
\shorttitle{Generalized Pellet's theorem for analytic matrix-valued functions}
\shortauthors{H. Itoh et~al.}

\title[mode=title]{Generalized Pellet's theorem for analytic
matrix-valued functions}
\author[1]{Hiromochi Itoh}[orcid=0009-0003-4749-5506]
\ead{itoh-hiromochi908@g.ecc.u-tokyo.ac.jp}
\author[2]{Kei Matsushima}[orcid=0000-0002-0352-8770]
\ead{matsushima@acs.hiroshima-u.ac.jp}
\author[3]{Takayuki Yamada}[orcid=0000-0002-5349-6690]
\ead{t.yamada@mech.t.u-tokyo.ac.jp}
\cormark[1]

\affiliation[1]{
  organization={Department of Mechanical Engineering, Graduate School
  of Engineering, The University of Tokyo},
  addressline={Yayoi 2-11-16},
  city={Bunkyo-ku},
  postcode={113-8656},
  state={Tokyo},
  country={Japan}
}
\affiliation[2]{
  organization={Graduate School of Advanced Science and Engineering,
  Hiroshima University},
  addressline={1-7-1 Kagamiyama},
  city={Higashi-Hiroshima},
  postcode={739-8521},
  state={Hiroshima},
  country={Japan}
}
\affiliation[3]{
  organization={Department of Strategic Studies, Institute of
    Engineering Innovation, Graduate School of Engineering,
  The University of Tokyo},
  addressline={Yayoi 2-11-16},
  city={Bunkyo-ku},
  postcode={113-8656},
  state={Tokyo},
  country={Japan}
}

\cortext[cor1]{Corresponding author}
\fi

\maketitle

\begin{abstract}
  Generalized Pellet's theorem is extended to analytic matrix-valued
  functions using Descartes' rule of signs for infinite series.
  Cauchy's estimate is incorporated to allow numerical computation of
  the bounds.
  To address the issue that Pellet's theorem and its
  generalizations may not always guarantee the existence of bounds,
  for the case of matrix size one,
  we employ the \dlg\ method, which
  ensures the existence of the bounds for the roots and further
  improves them as the order of the transform increases.
  Without the need for any a priori information on the function,
  these theorems allow us to compute mathematically verified
  bounds for the roots in a certain region
  using an interval arithmetic implementation of the function.
  Numerical examples and an implementation are provided to
  illustrate the effectiveness of the proposed method.
\end{abstract}

\ifpreprint\else
\begin{highlights}
\item A method for localizing the roots of the determinant of an analytic
  matrix-valued function is proposed
\item An extension of Pellet's theorem
  to analytic matrix-valued functions is established using Descartes' rule
  of signs for infinite series
\item
  Preconditioning the target function using the \dlg\ method
  guarantees useful localization results for scalar functions
\item An interval arithmetic implementation of the target function
  enables the computation of mathematically verified localization
  results without any a priori information about the function
\item Numerical examples are provided to illustrate the effectiveness
  of the proposed method
\end{highlights}
\fi

\ifpreprint
\noindent\textbf{Keywords:} Pellet's theorem, Graeffe iteration, Interval arithmetic, Verified computation, Nonlinear eigenvalue problems
\else
\begin{keywords}Pellet's theorem \sep
  Graeffe iteration \sep
  Interval arithmetic \sep
  Verified computation \sep
  Nonlinear eigenvalue problems
\end{keywords}
\fi

\section{Introduction}
\label{sec:introduction}

Nonlinear eigenvalue problems refer to the problem of finding $z \in
\CC$ such that $\det F(z) = 0$, where $F: \CC \to \CC^{l \times l}$
is a matrix-valued function.
If $l = 1$, the problem reduces to finding the roots of a scalar function.
In this work, $F$ is assumed to be analytic.

There are multiple methods to solve these problems, including
iterative methods (e.g., Newton's method) and contour
integration-based methods~\parencite{lyness_numerical_1967,
asakura_numerical_2009}.
However, to locate all roots in a certain region, in
most cases it is necessary to have a priori information about the
number and approximate location of the roots in the region.
In particular, for contour integration-based methods, the contour
must not be too close to the roots to reduce the numerical error of
the integration.
Thus, the localization of roots remains an important problem.

\textcite{bindel_localization_2013} proposed several methods to
localize nonlinear eigenvalues, including extensions of Gershgorin
disks and pseudospectral regions.
\textcite{michailidou_gershgorin_2018} developed further
Gershgorin-type sets for the case where $F$ is a matrix polynomial.
However, the computation of these sets may require checking if a
certain condition holds for all points in the region, which may be
computationally expensive.

Pellet's theorem, stated below, provides one of the most promising
methods to localize the roots of polynomials~\parencite[p.20]{rump_ten_2003}.
Its extension to polynomials with interval coefficients is
straightforward, and verified bounds for the roots may be obtained by
using interval arithmetic~\parencite{rump_ten_2003}.

\begin{theorem}[Pellet's theorem~\parencite{pellet_sur_1881}]
  \label{pellet-original}
  Let $n \in \NN$, let $p(z) = \sum_{j = 0}^n c_j z^j$ be a
  polynomial with complex coefficients with $c_0 c_n \neq 0$,
  and let $0 < k < n$.
  Then the number of positive roots of the polynomial $\pl_k \coloneq
  \sum_{j = 0}^{k-1} |c_j| x^j - |c_k| x^k + \sum_{j = k+1}^n
  |c_j| x^j$ is either zero or two, counting multiplicities.
  If it has two distinct positive roots $x_1 < x_2$, then $p$ has
  exactly $k$ roots in $\abs{z} \le x_1$ and no roots in $x_1 <
  \abs{z} < x_2$, counting multiplicities.
\end{theorem}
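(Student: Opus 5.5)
The plan has two parts: first count the positive roots of $\pl_k$ with Descartes' rule of signs, then get the root count of $p$ from Rouché's theorem applied on circles between $x_1$ and $x_2$.

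\textbf{Positive roots of $\pl_k$.} Ignoring zero coefficients, the sign sequence of the coefficients of $\pl_k$ is $+,\dots,+,-,+,\dots,+$. The leading term is $+$ because $c_n\neq 0$ and $k<n$, and the constant term is $+$ because $c_0\neq0$ and $k>0$. So there are exactly two sign changes. Descartes' rule then says the number of positive roots, counted with multiplicity, is $2$ or $0$.

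If there are two distinct positive roots $x_1<x_2$, they are simple. Since $\pl_k(0)=|c_0|>0$ and $\pl_k(x)\to+\infty$ as $x\to\infty$, we get $\pl_k>0$ on $(0,x_1)\cup(x_2,\infty)$ and $\pl_k<0$ on $(x_1,x_2)$. The sign on the middle interval is forced because there are no further roots and both roots are simple, so the sign alternates.

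\textbf{Rouché on intermediate circles.} Fix $r\in(x_1,x_2)$ and write $p=f+g$ with $f(z)=c_kz^k$ and $g=p-f$. On $|z|=r$,
\[
|g(z)|\le\sum_{j\ne k}|c_j|r^j<|c_k|r^k=|f(z)|,
\]
where the strict inequality is exactly $\pl_k(r)<0$. Rouché's theorem then gives that $p$ has exactly $k$ zeros in $|z|<r$, the same number as $f$. The strict inequality also shows $p$ has no zeros on $|z|=r$.

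Now let $N(r)$ be the number of zeros of $p$ in $|z|<r$. Then $N(r)=k$ for every $r\in(x_1,x_2)$, and no zero of $p$ has modulus in $(x_1,x_2)$; this is the second claim. Letting $r\downarrow x_1$, the closed disk $|z|\le x_1$ equals $\bigcap_r\{|z|<r\}$, so it contains exactly $k$ zeros, which is the first claim.

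\textbf{Main obstacle.} The delicate step is the strict sign of $\pl_k$ on $(x_1,x_2)$, which relies on multiplicity counting in Descartes' rule. The hypothesis of two \emph{distinct} roots rules out a double root, where $\pl_k$ would only touch zero and the Rouché inequality would fail to be strict. Everything else is routine.
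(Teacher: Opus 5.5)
Your proof is correct, and it follows the template the paper uses to prove \zcref{generalized-pellet-infinite} (the paper only cites Pellet's original theorem): compare $p$ with $c_k z^k$ by Rouché on every circle $\abs{z}=r$ where $\pl_k(r)<0$, bound the positive roots with Descartes' rule, and use positivity of $\pl_k$ near $0$ to force $\pl_k<0$ on $(x_1,x_2)$; the classical parity form of Descartes gives you ``zero or two'', whereas the paper's infinite-series version only yields ``at most two''. One small slip: ``exactly two sign changes'' presumes $c_k\neq 0$, but if $c_k=0$ then $\pl_k>0$ on $(0,\infty)$, so the zero-root case holds trivially.
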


In this work, an annulus $x_1 < \abs{z} < x_2$ where no roots
exist, obtained by Pellet's theorem or its extensions, is called
a ``Pellet bound.''
A Pellet bound is said to be ``improved'' if the annulus widens
(i.e., $x_1$ becomes smaller, $x_2$ becomes larger, or both) because
the possible region of the roots becomes smaller.

\textcite{niu_verified_2007} proposed a method to compute verified
Pellet bounds for the roots of analytic functions using
Pellet's theorem, Cauchy's estimate and interval arithmetic.
However, the two methods are used separately and a certain
condition~\parencite[(2.5)]{niu_verified_2007} must be met for the Pellet bound
obtained by Pellet's theorem to be valid, which may not always be the case.
In this work, both methods are deeply integrated using Descartes' rule
of signs for infinite series~\parencite[\S1]{curtiss_recent_1918} to
eliminate the need to check such additional conditions.

\zcref{pellet-original} has been recently extended
by \textcites[Theorem 2.1]{bini_locating_2013}[Theorem
3.3]{melman_generalization_2013} to matrix-valued polynomials as follows.
In this work, the method is further extended to matrix-valued
analytic functions in an analogous manner.

\begin{theorem}[Generalized Pellet's
  theorem~\parencite{bini_locating_2013, melman_generalization_2013}]
  \label{pellet-matrix-original}
  Let $l \in \NN$, let $\norm{\cdot}$ be any submultiplicative
  norm on $l \times l$
  complex matrices $\CC^{l \times l}$.
  Let $n \in \NN$, let $P(z) = \sum_{j = 0}^n C_j z^j$ be a
  complex matrix-valued
  polynomial with $\norm{C_0} \norm{C_n} \neq 0$.
  Let $k \in \{1, \dots, n-1\}$.
  Assume that $C_k$ is invertible.
  Then the number of positive roots of the polynomial $\pl_k \coloneq
  \sum_{j = 0}^{k-1} \norm{C_j} x^j - \norm{C_k^{-1}}^{-1} x^k +
  \sum_{j = k+1}^n \norm{C_j} x^j$ is either zero or two,
  counting multiplicities.
  If it has two distinct positive roots $x_1 < x_2$, then
  $\det(P)$ has exactly $k l$ roots in $\abs{z} \le x_1$ and no roots in
  $x_1 < \abs{z} < x_2$, counting multiplicities.
\end{theorem}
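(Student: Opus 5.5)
The plan is to prove the two claims in turn: first the root count of $\pl_k$ by Descartes' rule of signs, then the eigenvalue count by a homotopy and continuity argument.

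\emph{Root count of $\pl_k$.} The coefficient sequence of $\pl_k$ has sign pattern $+\cdots+\,-\,+\cdots+$ once zero coefficients are discarded. The first and last coefficients $\norm{C_0}$ and $\norm{C_n}$ are positive, and the coefficient of $x^k$ is $-\norm{C_k^{-1}}^{-1}<0$. So there are exactly two sign changes, and Descartes' rule of signs says the number of positive roots, counted with multiplicity, is $2$ or $0$.

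\emph{Strict inequality on the annulus.} Suppose $\pl_k$ has distinct positive roots $x_1<x_2$. Since $\pl_k(0)>0$ and $\pl_k(x)\to+\infty$, we get $\pl_k(x)<0$ for $x_1<x<x_2$. Fix $z$ with $\abs{z}=r\in(x_1,x_2)$ and write $P(z)=C_kz^k\bigl(I+C_k^{-1}z^{-k}\sum_{j\ne k}C_jz^j\bigr)$. By submultiplicativity,
\[
\Bigl\|C_k^{-1}z^{-k}\sum_{j\ne k}C_jz^j\Bigr\|\le \norm{C_k^{-1}}\,r^{-k}\sum_{j\ne k}\norm{C_j}r^j<1 ,
\]
where the last step is exactly $\pl_k(r)<0$. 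A submultiplicative norm dominates the spectral radius, so $I+E$ with $\norm{E}<1$ is invertible. Hence $\det P(z)\neq0$, which gives the annulus statement.

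\emph{Counting the $kl$ roots.} Introduce the homotopy $P_t(z)=C_kz^k+t\sum_{j\ne k}C_jz^j$ for $t\in[0,1]$. The estimate above holds uniformly in $t$, because the perturbation is only multiplied by $t\le1$. So $\det P_t$ has no zeros on the circle $\abs{z}=r$ for any fixed $r\in(x_1,x_2)$. The number of zeros of $\det P_t$ inside this circle is $\frac{1}{2\pi i}\oint (\det P_t)'/\det P_t\,dz$. This integral is continuous in $t$ and integer valued, hence constant in $t$. At $t=0$ we have $\det P_0(z)=\det(C_k)\,z^{kl}$ with $\det C_k\ne0$, which has exactly $kl$ zeros at the origin. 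Therefore $\det P$ has $kl$ zeros in $\abs{z}<r$. Since there are no zeros in $x_1<\abs{z}<r$, letting $r\downarrow x_1$ gives exactly $kl$ roots in $\abs{z}\le x_1$.

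\emph{Main obstacle.} The delicate step is the homotopy. One must check that $\det P_t\not\equiv0$; this holds because $\det P_t$ is nonzero on the circle. One also needs the argument-principle integral to be continuous in $t$, which follows from joint continuity of $\det P_t(z)$ in $(t,z)$ together with its being bounded away from zero on the compact circle. Alternatively, one can apply Rouché's theorem directly to $\det(C_kz^k)$ and $\det P(z)$, but the homotopy avoids having to compare determinants directly.
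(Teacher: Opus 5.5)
Your proposal is correct and follows essentially the same route as the paper's proof of its analytic generalization (\zcref{generalized-pellet-infinite}): Descartes' rule for the root count, the bound $\norm{(C_k z^k)^{-1}(P(z) - C_k z^k)} < 1$ on every circle $\abs{z} = r$ with $\pl_k(r) < 0$, and a Rouch\'e-type transfer of the $kl$ zeros of $\det(C_k z^k)$. The only differences are these: you prove the matrix Rouch\'e step yourself via the homotopy $P_t$ and the argument principle instead of citing \zcref{generalized-rouche}, and you get the empty annulus directly from the invertibility of $I+E$ rather than from the constancy of the root count for $x \in (x_1, x_2)$.
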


However, two distinct positive roots of the polynomial $\pl_k$ in
\zcref{pellet-original} and \zcref{pellet-matrix-original} may
not always exist as illustrated in the following example,
limiting the applicability of the result.
The same issue also exists for the extension to analytic functions in this work.

\begin{example}
  Let $f(z) = (z - 1)(z + 2) = z^2 + z - 2$.
  While $f$ has exactly one root in $\abs{z} \le 1$ and no
  roots in $1 < \abs{z} < 2$, $\pl_1 (x) \coloneq x^2 - x + 2$
  has no positive roots and Pellet's theorem for $k = 1$ does
  not provide any
  information on the roots of $f$.
\end{example}

To address this issue, following \textcite{becker_near-optimal_2018,
melman_polynomials_2022}, we apply the
\dlg\ method~\parencite[Theorem
1]{householder_dandelin_1959}, a root-squaring method, to the
target function for the scalar (or $1 \times 1$ matrix) case.
This method not only guarantees the existence of Pellet
bounds, but also improves the Pellet bounds as the order of the
transform increases.
However, a direct extension of this preconditioning method to
matrix-valued functions is challenging.
Therefore, to guarantee the existence of Pellet bounds, the
determinant $\det F$ of the matrix-valued function $F$ is
targeted, which is a scalar function.

The remainder of the paper is organized as follows.
In \zcref{chap-generalized-pellet-infinite}, the direct extension of
Pellet's theorem for analytic matrix-valued functions is proved.
In \zcref{chap-generalized-pellet-cauchy}, the extension is further
improved by incorporating Cauchy's estimate to allow numerical
computation of Pellet bounds.
In \zcref{chap-verified-fourier-coefficients}, the known results on
verified computation of Fourier coefficients are summarized,
which allow verified computation of the bounds.
In \zcref{chap-preconditioning}, the convergence of the bounds
obtained by applying the
\dlg\ method is proved.
Finally, in \zcref{chap-numerical-examples}, numerical examples
using interval arithmetic are provided to demonstrate the
effectiveness of the proposed method.

\section{Generalized Pellet's Theorem for Analytic Matrix-Valued Functions}

\label{chap-generalized-pellet-infinite}

The multiplicity of a root $z$ of a complex-valued analytic
function $f$ is
defined as the smallest $k \in \NN$ such that $f^{(k)} (z) \neq 0$.
In a bounded closed region, an analytic function has at most a finite
number of roots, counting multiplicities, unless it is identically zero.
The identically zero case is not considered in this work.

The proofs of Pellet's theorem and its generalization use Descartes' rule
of signs, which relates the number of positive roots of a polynomial
to the number of sign changes in its coefficients.
The extension of Pellet's theorem to analytic (matrix-valued)
functions may be proved analogously using Descartes' rule of signs
for infinite series, stated below.

\begin{theorem}[Descartes' rule of signs for infinite
  series~{\parencite[\S1]{curtiss_recent_1918}}]
  \label{descartes-infinite}
  Let $\rho \in (0, \infty]$.
  Assume that a real-valued function $f(x) \coloneq \sum_{j =
  0}^\infty a_j x^j \not\equiv
  0$, $a_j \in \RR$ converges pointwise in $(0, \rho)$. Let $q
  \in \NN_0 \cup
  \{\infty\}$ be the number of roots of $f$ in $(0, \rho)$, counting
  multiplicities. Let
  \[
    m \coloneq \abs{\ab\{j \in \NN_0 \Bigg|
        \begin{aligned}
          &\text{ there exists } s \in \NN \text{ such that }
          \signop(a_j) \neq \signop(a_{j + s}) \\
          &\text{ and } a_{j + 1} = \dots =
          a_{j + s - 1} = 0 \text{ and } a_j \neq 0 \text{ and }
          a_{j + s} \neq 0
        \end{aligned}
    \}} \in \NN_0 \cup \{\infty\}
  \]
  be the number of sign changes
  between consecutive nonzero coefficients of $f$. Then $q \le m$.
\end{theorem}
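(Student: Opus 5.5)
The plan is the classical induction on the number of sign changes $m$, using Rolle's theorem together with the trick of multiplying by a power $x^{-\alpha}$ before differentiating, adapted to power series. If $m = \infty$ there is nothing to prove, so assume $m < \infty$. First remove a common factor: if $a_0 = \dots = a_{r-1} = 0 \neq a_r$, replace $f$ by $f(x)/x^r$. This changes neither the positive roots nor their multiplicities, and leaves $m$ unchanged. So we may assume $a_0 \neq 0$. Since $f$ converges pointwise on $(0,\rho)$, the radius of convergence is at least $\rho$. Hence $f$ is real-analytic on $(0,\rho)$, termwise differentiation is legitimate there, and the multiplicity of each root is finite because $f \not\equiv 0$.

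\textbf{Base case $m = 0$.} All nonzero coefficients have the same sign, so $f(x) \neq 0$ for every $x \in (0,\rho)$, and $q = 0$.

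\textbf{Inductive step.} Suppose $m \ge 1$ and the claim holds for fewer sign changes. Since $m$ is finite, choose an index $t$ at which the last sign change occurs, namely between $a_{j}$ and $a_{t}$ with $j < t$ consecutive nonzero coefficients. Pick a real $\alpha$ with $j < \alpha < t$, and set
\[
  g(x) \coloneq \frac{d}{dx}\bigl(x^{-\alpha} f(x)\bigr) = \sum_{i} (i - \alpha) a_i x^{i - \alpha - 1}.
\]
Then $h(x) \coloneq x^{\alpha+1} g(x) = \sum_i (i-\alpha) a_i x^i$ is again a power series converging on $(0,\rho)$. Its nonzero coefficients occur at the same indices as those of $f$. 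The sign is unchanged for $i > \alpha$ and flipped for $i < \alpha$. Because $\alpha$ sits exactly at the sign change between $a_j$ and $a_t$, that change disappears and every other change is preserved. So $h$ has exactly $m-1$ sign changes, and $h \not\equiv 0$ since $a_0 \neq 0$ and $\alpha \neq 0$. By the induction hypothesis, $h$, and hence $g$, has at most $m-1$ roots in $(0,\rho)$, counting multiplicities.

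\textbf{Rolle step.} It remains to show that $q \le q_g + 1$, where $q_g$ counts the roots of $g$ with multiplicity. The function $x^{-\alpha} f$ has the same roots as $f$ in $(0,\rho)$, with the same multiplicities.
\begin{itemize}
  \item A root of multiplicity $\mu$ of $x^{-\alpha}f$ is a root of multiplicity $\mu-1$ of $g$.
  \item Between any two consecutive distinct roots of $x^{-\alpha}f$, Rolle's theorem gives a further root of $g$.
\end{itemize}
If $q$ is finite with distinct roots $r_1 < \dots < r_s$, this yields $q_g \ge (q - s) + (s-1) = q-1$, so $q \le m$.

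\textbf{Main obstacle: $q = \infty$.} I expect the case of infinitely many roots to be the delicate point. Roots can only accumulate at $0$ or at $\rho$, and the counting argument above must be applied to finite subsets. Taking any finite collection of roots with total multiplicity $N$, the same Rolle count gives at least $N-1$ roots of $g$, hence $N - 1 \le m-1$. Since $N$ is arbitrary this forces $q$ to be finite, and then $q \le m$ follows as before.

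The one remaining technical point is the justification that $h$ converges on $(0,\rho)$. This follows because multiplying coefficients by $(i-\alpha)$, which grows only polynomially, does not change the radius of convergence.
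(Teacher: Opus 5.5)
Your proof is correct. The paper gives no proof of this statement; it imports it from Curtiss, so there is no in-paper argument to compare against step by step. What you give is Laguerre's classical Rolle argument, and your adaptation to power series is sound:
\begin{itemize}
  \item Pointwise convergence on $(0,\rho)$ forces the radius of convergence to be at least $\rho$. So $f$ is real-analytic on $(0,\rho)$, its roots there have finite multiplicity, and $h(x) = x f'(x) - \alpha f(x)$ again converges on $(0,\rho)$.
  \item Placing $\alpha$ strictly between the consecutive nonzero indices $j < t$ removes exactly that one sign change. The nonzero indices are unchanged, because any integer in $(j,t)$ carries a zero coefficient.
  \item The step $\mu \mapsto \mu - 1$, together with one Rolle root in each gap, gives at least $N-1$ roots of $g$ for any finite set of roots of $f$ of total multiplicity $N$. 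Since the induction hypothesis gives at most $m-1$ roots of $g$, this yields $N \le m$ for every such finite set. That handles $q = \infty$ cleanly as well.
\end{itemize}

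Two small points:
\begin{itemize}
  \item The normalization $a_0 \neq 0$ is unnecessary. Already $h \not\equiv 0$ because $a_t \neq 0$ and $t \neq \alpha$. Likewise, any sign change can be removed, not only the last one.
  \item Your remark about accumulation points is imprecise, though you never use it. Zeros cannot accumulate at $0$, where $f$ is analytic and $\not\equiv 0$. When $\rho = \infty$ they can only escape to $+\infty$.
\end{itemize}

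The other textbook route writes $f(x) = (x-c)g(x)$ for a positive root $c$ and shows that multiplication by $x-c$ adds at least one sign change. That route needs a combinatorial lemma on coefficient sequences. For series it also needs a separate tail argument, since there is no leading coefficient; one uses that $g$ converges at $c$, so $b_j c^j \to 0$. Your Rolle argument avoids both and handles multiplicities automatically.
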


Generalized Rouché's theorem, stated below, is used in the proof of
generalized Pellet's theorem and is also used in our proof.
Throughout this work, $\norm{\cdot}$ denotes a submultiplicative
matrix norm.

\begin{theorem}[Generalized Rouché's theorem~{\parencites[Theorem
  3.2]{melman_generalization_2013}[Theorem 9.2]{gohberg_fredholm_1990}}]
  \label{generalized-rouche}
  Let $\Omega \subseteq \CC$ be open and connected, let $A, B:
  \Omega \to \CC^{l \times l}$ be analytic matrix-valued
  functions, and assume that $A(z)$ is invertible for all $z$ on
  a smooth Jordan curve $\Gamma \subseteq \Omega$ with
  $\mathrm{Int}(\Gamma) \subseteq \Omega$. Assume that
  $\|A(z)^{-1} B(z)\| < 1$ for all $z \in \Gamma$. Then
  $\det(A + B)$ and $\det A$ have the same number of roots in
  $\mathrm{Int}(\Gamma)$, counting multiplicities.
\end{theorem}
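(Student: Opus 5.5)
The plan is to use a homotopy argument with the argument principle applied to the scalar analytic functions $g_t(z) \coloneq \det\bigl(A(z) + tB(z)\bigr)$ for $t \in [0,1]$. We have $g_0 = \det A$ and $g_1 = \det(A+B)$. The aim is to show that the number of roots of $g_t$ in $\mathrm{Int}(\Gamma)$ is independent of $t$.

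First I check that $g_t$ has no zeros on $\Gamma$ for any $t \in [0,1]$. For $z \in \Gamma$ we factor
\[
  A(z) + tB(z) = A(z)\bigl(I + tA(z)^{-1}B(z)\bigr).
\]
Since $\norm{\cdot}$ is submultiplicative, every eigenvalue $\lambda$ of a matrix $M$ satisfies $\abs{\lambda} \le \norm{M}$; this follows by applying the norm to $M v v^* = \lambda v v^*$. With $M = tA(z)^{-1}B(z)$ we have $\norm{M} < 1$, so $-1$ is not an eigenvalue of $M$ and $I + M$ is invertible. Alternatively, the Neumann series for $(I+M)^{-1}$ converges. Hence $g_t(z) = \det A(z)\,\det(I+M) \neq 0$ on $\Gamma$. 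In particular no $g_t$ is identically zero on the connected set $\Omega$, so each $g_t$ has finitely many roots in the compact set $\overline{\mathrm{Int}(\Gamma)} \subseteq \Omega$.

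Next, by the argument principle (using that $\Gamma$ is a smooth Jordan curve and $g_t$ is analytic on a neighbourhood of $\overline{\mathrm{Int}(\Gamma)}$), the number of roots of $g_t$ in $\mathrm{Int}(\Gamma)$, counting multiplicities, is
\[
  N(t) = \frac{1}{2\pi i}\oint_\Gamma \frac{\partial_z g_t(z)}{g_t(z)} \dd{z}.
\]
The function $(t,z) \mapsto g_t(z)$ is a polynomial in $t$ whose coefficients are analytic in $z$, since the determinant is a polynomial in the matrix entries. Therefore both $g_t(z)$ and $\partial_z g_t(z)$ are jointly continuous on the compact set $[0,1]\times\Gamma$. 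By the first step, $\abs{g_t(z)}$ is bounded below there by a positive constant. Thus the integrand is uniformly continuous, and $N$ is continuous in $t$.

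Finally, $N$ is a continuous, integer-valued function on the connected interval $[0,1]$, so it is constant. This gives $N(0) = N(1)$, which is the claim.

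The main obstacle is the invertibility step on $\Gamma$. It needs the spectral radius bound from an arbitrary submultiplicative norm, and it must hold uniformly in $t$. I would handle it with the eigenvector argument above, and obtain uniformity from compactness of $[0,1]\times\Gamma$ together with continuity of $g_t(z)$. The remaining steps are routine applications of the argument principle.
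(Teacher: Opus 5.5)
Your proof is correct and complete. The eigenvector bound $|\lambda| \le \|M\|$ is exactly where the paper's standing assumption that $\|\cdot\|$ is submultiplicative is needed. After that, the homotopy $g_t = \det(A + tB)$, the uniform lower bound for $|g_t|$ on the compact set $[0,1]\times\Gamma$, and the continuity of the argument-principle integral in $t$ are all handled properly.

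The paper gives no proof of its own to compare against. It quotes the result from Melman and from Gohberg--Goldberg--Kaashoek and then uses it as a black box in the proof of \zcref{generalized-pellet-infinite}.

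The classical proof of the operator-valued version (Gohberg--Sigal), which stands behind those citations, uses the same homotopy $W_t = A + tB$, $t \in [0,1]$, but works without determinants:
\begin{itemize}
\item zeros are counted with Gohberg--Sigal (partial-multiplicity) multiplicities;
\item they are computed by the logarithmic residue $\frac{1}{2\pi i}\operatorname{tr}\oint_\Gamma W_t'(z)\,W_t(z)^{-1}\,dz$, which is shown to be continuous and integer-valued in $t$.
\end{itemize}
For matrices, Jacobi's formula $\operatorname{tr}\bigl(W'W^{-1}\bigr) = (\det W)'/\det W$ turns that integral into your $N(t)$. Via the local Smith form, the Gohberg--Sigal multiplicity of a zero also equals its order as a root of $\det W$. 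So your argument is the finite-dimensional specialization of the cited one.

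Your version is self-contained: it needs only the scalar argument principle and the spectral-radius bound. The cited formulation extends to analytic Fredholm operator functions, where no determinant is available, at the cost of the heavier theory of operator-valued logarithmic residues.
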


Now the main theorem of this section is stated and proved.
The closed disk $D_\rho \coloneq \ab\{z \in \CC \mid \abs{z} \le
\rho \}$ is
defined for $\rho > 0$.
For $\rho = \infty$, $D_\rho$ is defined as $\CC$.
In this work, a function is said to be
analytic on a closed disk if it is analytic on an open set containing
the closed disk.
Additionally, an empty sum is defined as zero, and an empty
product is defined as one.

\begin{theorem}[Generalized Pellet's theorem for analytic functions]
  \label{generalized-pellet-infinite}
  Let $\rho \in (0, \infty]$, let
  $F: D_\rho \to \CC^{l \times l}, z \mapsto \sum_{j \in \NN_0} C_j
  z^j \not\equiv 0$ be a matrix-valued analytic function, and
  let $k \in \NN_0$.
  Assume that $C_k$ is invertible.
  Then the number of roots of the function $\pl_k (x)
  \coloneq \sum_{j \in \NN_0 \setminus \{k\}} \norm{C_j} x^j -
  \norm{C_k^{-1}}^{-1} x^k$ in $(0, \rho)$ is either zero, one
  or two, counting
  multiplicities.
  If $\pl_k$ has two distinct roots
  $x_1 < x_2$ in $(0, \rho)$, then
  $\det F$ has exactly $k l$ roots in $\abs{z} \le x_1$ and no roots
  in $x_1 < \abs{z} < x_2$, counting multiplicities.
\end{theorem}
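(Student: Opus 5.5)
We plan to mirror the proofs of \zcref{pellet-original} and \zcref{pellet-matrix-original}, replacing Descartes' rule for polynomials by \zcref{descartes-infinite} and the polynomial comparison by \zcref{generalized-rouche} applied on circles $\abs{z} = x$ with $x < \rho$.

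\textbf{Step 1 (counting roots of $\pl_k$).} Since $F$ is analytic on an open set containing $D_\rho$, the series $\sum_j \norm{C_j} x^j$ converges for $x \in (0,\rho)$. Indeed, for $\rho<\infty$ the radius of convergence of $\sum_j C_j z^j$ exceeds $\rho$, and for $\rho=\infty$ it is infinite; on finite dimensional spaces all norms are equivalent, so the series converges absolutely. Hence $\pl_k$ is a convergent power series on $(0,\rho)$. It is not identically zero, because its $k$-th coefficient $-\norm{C_k^{-1}}^{-1} < 0$. All other coefficients are $\ge 0$, so the nonzero coefficients change sign at most twice: once entering index $k$ (if some $C_j \neq 0$ with $j<k$) and once leaving it (if some $C_j\neq 0$ with $j>k$). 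Then \zcref{descartes-infinite} gives at most two roots in $(0,\rho)$, counting multiplicities.

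\textbf{Step 2 (Rouché on a circle).} Fix $x$ with $x_1 < x < x_2$. Because $\pl_k$ has exactly two roots, which are simple, and $\pl_k(x)\to \norm{C_0}\ge 0$ as $x\to 0^+$, the sign of $\pl_k$ between the roots is negative. More carefully, $\pl_k$ changes sign at each simple root, so it has opposite signs on $(x_1,x_2)$ and on $(0,x_1)$. Near $0^+$, $\pl_k$ is positive if $C_0\neq0$, or more generally if some $C_j\ne0$ with $j<k$. Otherwise it is negative near $0$, but then only one sign change exists and there could be at most one root, a contradiction. Thus $\pl_k(x)<0$, i.e.\ $\sum_{j\ne k}\norm{C_j}x^j < \norm{C_k^{-1}}^{-1}x^k$.

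Now take $A(z)=C_k z^k$ and $B(z)=F(z)-C_kz^k$. On $\abs{z}=x$, $A(z)$ is invertible, and by submultiplicativity
\[
\norm{A(z)^{-1}B(z)} \le \norm{C_k^{-1}}\, x^{-k} \sum_{j\ne k}\norm{C_j}x^j < 1 .
\]
Here $\norm{\sum_j M_j}\le\sum_j\norm{M_j}$ for the convergent series. \zcref{generalized-rouche} with $\Omega$ a neighbourhood of $D_\rho$ (or $\CC$) then gives that $\det F$ has as many roots in $\abs{z}<x$ as $\det(C_kz^k)=\det(C_k)z^{kl}$, namely $kl$. It also shows $\det F\ne0$ on $\abs{z}=x$, since otherwise $F(z)=A(z)(I+A^{-1}B)$ with $\norm{A^{-1}B}<1$ would be singular, which is impossible.

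\textbf{Step 3 (conclusion).} Every $x\in(x_1,x_2)$ gives the circle $\abs{z}=x$ free of roots, so the open annulus $x_1<\abs{z}<x_2$ contains no roots. The count in $\abs{z}<x$ is $kl$ for every such $x$. Letting $x\downarrow x_1$, the roots in $\abs{z}\le x_1$ number exactly $kl$, since the roots in the annulus are absent.

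The main obstacle is the sign bookkeeping in Step 2: making sure that two distinct roots in $(0,\rho)$ force $\pl_k<0$ strictly between them, including the degenerate case where $C_j=0$ for all $j<k$. This is handled by the sign-change count from Step 1.
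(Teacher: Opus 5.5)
Your proposal is correct and follows essentially the same route as the paper: Descartes' rule for infinite series bounds the number of roots of $\pl_k$ by two, the existence of a nonzero $C_j$ with $j<k$ forces $\pl_k<0$ on $(x_1,x_2)$, and generalized Rouché with $A(z)=C_kz^k$ is applied on each circle $\abs{z}=x$. Your extra details only make explicit what the paper leaves implicit: the absolute convergence of $\sum_j\norm{C_j}x^j$, the simplicity of the two roots, and the Neumann-series argument that $\det F\neq0$ on each such circle (where the paper instead says ``the count does not change'').
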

\begin{proof}
  Let $x \in (0, \rho)$. If $\pl_k (x) < 0$, then for all $z$ with
  $\abs{z} = x$, $\pl_k
  (\abs{z}) < 0$ holds. Therefore, $\sum_{j \in \NN_0 \setminus \{k\}}
  \norm{C_j} \abs{z^j} < \norm{C_k^{-1}}^{-1} \abs{z^k}$ holds.
  The right-hand side is equal to $\norm{(C_k
  z^k)^{-1}}^{-1}$.
  By applying the triangle inequality to the left-hand
  side, $\norm{\sum_{j \in \NN_0 \setminus \{k\}} C_j z^j} <
  \norm{(C_k z^k)^{-1}}^{-1}$ holds.
  Let $A(z) \coloneq C_k z^k, B(z) \coloneq F(z) - A(z)$. Then
  $\norm{B(z)}
  < \norm{A(z)^{-1}}^{-1}$ holds; therefore $\norm{A(z)^{-1} B(z)} < 1$ holds.
  Thus \zcref{generalized-rouche} implies that $\det F = \det
  (A + B)$ and $\det A$ have the same number of roots in $\abs{z} <
  x$, counting multiplicities.
  Since $\det A$ has exactly $k l$ roots in $\abs{z} < x$
  (all at $z = 0$ when $k > 0$), $\det F$ has exactly $k l$ roots in
  $\abs{z} < x$, counting multiplicities.

  Since the $k$-th coefficient of $\pl_k$ is negative and
  the other coefficients of $\pl_k$ are nonnegative, the number of sign
  changes of the coefficients of $\pl_k$ is either zero, one or two.
  Therefore, by \zcref{descartes-infinite}, the number of roots
  of $\pl_k$ in $(0, \rho)$ is either zero, one or two, counting
  multiplicities.
  If $\pl_k$ has two distinct (and consequently simple) roots
  $x_1 < x_2$ in $(0, \rho)$,
  \zcref{descartes-infinite} implies that $\pl_k$ has two sign
  changes, meaning that there exists $j < k$ such that $\norm{C_j} >
  0$. Hence, for sufficiently small $x > 0$, $\pl_k (x) > 0$ holds.
  Therefore, $\pl_k (\abs{z}) < 0$ holds for all $z$ such that
  $x_1 < \abs{z} < x_2$.
  In this region, the number of roots of $\det F$ does not change,
  meaning that $\det F$ has no roots in $x_1 < \abs{z} < x_2$ and
  therefore has exactly $k l$ roots in $\abs{z} \le x_1$.
\end{proof}
\begin{remark}
  Similarly to the discussion in
  \textcite{melman_generalization_2013},
  better Pellet bounds may be obtained by substituting
  $C_k^{-1} F(z)$ for $F(z)$ in \zcref{generalized-pellet-infinite}
  if $l > 1$.
\end{remark}

\begin{remark}
  For cases where the number of roots in $(0, \rho)$ is zero or one,
  Pellet bounds may or may not be obtained depending on the signs of $\pl_k$.
  The discussion is omitted here since it can be easily handled
  using arguments analogous to those in \textcite{bini_locating_2013} and
  \zcref{generalized-pellet-cauchy} in the next section.
\end{remark}

\section{Generalized Pellet's theorem for analytic functions using
Cauchy's estimate}
\label{chap-generalized-pellet-cauchy}

\zcref{generalized-pellet-infinite} cannot be used for numerical
computation in general, since it is not feasible to calculate
$\norm{C_j}$ for all $j \in \NN_0$.
Therefore, Cauchy's estimate, stated below, is incorporated.
The maximum value of the norm of the function on a circle is
required, and this value may be obtained by dividing the circle into
multiple intervals and evaluating the upper bound of the interval
enclosure of $F$ on each interval, given that an interval arithmetic
implementation of $F$ is available.

\begin{theorem}[Cauchy's estimate for matrix-valued functions]
  \label{cauchy-estimate}
  Let $\rho > 0$, and let $F: D_\rho \to \CC^{l \times l},
  z \mapsto \sum_{j \in \NN_0} C_j z^j$ be an analytic
  matrix-valued function.
  Let $n \in \NN_0$, let $M_\rho$ be such that
  \[M_\rho
    \ge \min\ab(\max_{\abs{z} =
      \rho} \norm{F(z) - \sum_{j' = 0}^{n-1} C_{j'} z^{j'}},
  \max_{\abs{z} = \rho} \norm{F(z)}).\]
  Then for all $j \geq n$,
  $ \norm{C_j} \le M_\rho \rho^{-j} $ holds.
\end{theorem}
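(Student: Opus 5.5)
The plan is to reduce the statement to the classical Cauchy integral formula for Taylor coefficients, applied entrywise, and then pass the norm inside the integral.

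\textbf{Integral representation.} Since $F$ is analytic on an open set containing $D_\rho$, each entry of $F$ has a Taylor expansion about $0$ converging on a neighbourhood of the closed disk. Hence, for every $j \in \NN_0$,
\[
  C_j = \frac{1}{2\pi i} \oint_{\abs{z} = \rho} \frac{F(z)}{z^{j+1}} \dd{z},
\]
understood entrywise. Put $G_n(z) \coloneq F(z) - \sum_{j'=0}^{n-1} C_{j'} z^{j'}$. This is analytic on the same neighbourhood, and its Taylor coefficients are $0$ for indices below $n$ and $C_j$ for $j \ge n$. So for $j \ge n$ the same formula holds with $F$ replaced by $G_n$. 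Both representations are therefore available for every $j \ge n$.

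\textbf{Norm estimate.} The key step is to bound the norm of a matrix-valued contour integral by the integral of the norm. Parametrize $z = \rho e^{i\theta}$, so that
\[
  C_j = \frac{1}{2\pi} \int_0^{2\pi} H(\rho e^{i\theta})\, \rho^{-j} e^{-ij\theta} \dd{\theta}
\]
for $H \in \{F, G_n\}$. The integrand is continuous, so the integral is a limit of Riemann sums. Apply the triangle inequality to each Riemann sum. Any matrix norm is continuous, since all norms on the finite-dimensional space $\CC^{l\times l}$ are equivalent, so the inequality passes to the limit. This gives
\[
  \norm{C_j} \le \frac{1}{2\pi} \int_0^{2\pi} \norm{H(\rho e^{i\theta})} \rho^{-j} \dd{\theta} \le \rho^{-j} \max_{\abs{z}=\rho} \norm{H(z)}.
\]
Absolute homogeneity handles the scalar factor $\rho^{-j} e^{-ij\theta}$. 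Submultiplicativity is not needed.

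\textbf{Conclusion.} The bound holds both for $H = F$ and for $H = G_n$. Taking the smaller of the two right-hand sides gives $\norm{C_j} \le \rho^{-j} \min(\cdot,\cdot) \le M_\rho \rho^{-j}$ for all $j \ge n$.

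The only point requiring care is the norm-of-integral inequality for a general, non-induced matrix norm, which the Riemann-sum argument settles. The rest is routine, including the fact that $G_n$ has the same coefficients $C_j$ for $j \ge n$.
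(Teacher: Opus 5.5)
Your proposal is correct and takes essentially the same route as the paper. Both arguments use the fact that $F$ and $F - \sum_{j'=0}^{n-1} C_{j'} z^{j'}$ share the coefficient $C_j$ for $j \ge n$, then apply the Cauchy integral formula on $\abs{z} = \rho$ and bound the norm of the integral by the integral of the norm; your Riemann-sum justification of that last inequality for an arbitrary matrix norm supplies a detail the paper uses without comment.
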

\begin{proof}
  It suffices to show the case $M_\rho \ge \max_{\abs{z} = \rho}
  \norm{F(z)}$ since $F(z)$ and $F(z) - \sum_{j' = 0}^{n - 1} C_{j'}
  z^{j'}$ have the same $j$-th coefficient. This holds since
  \begin{align*}
    \norm{C_j} &= \norm{\frac{1}{2 \pi i} \int_{\abs{z} = \rho}
    \frac{F(z)}{z^{j+1}} \dd{z}}
    = \norm{\frac{1}{2 \pi i} \int_0^{2 \pi} \frac{F(\rho \exp\ab(i
      \theta))}{\rho^{j+1} \exp\ab(i (j + 1) \theta)} i \rho
    \exp\ab(i \theta) \dd{\theta}}
    \le \frac{1}{2 \pi} \frac{M_\rho}{\rho^j} \int_0^{2 \pi} \dd{\theta}
    = \frac{M_\rho}{\rho^j}.
  \end{align*}
\end{proof}

\begin{theorem}[Generalized Pellet's theorem for analytic functions
  using Cauchy's estimate]
  \label{generalized-pellet-cauchy}
  Let $\rho > 0$, and let $F: D_\rho \to \CC^{l \times l}, z
  \mapsto \sum_{j \in \NN_0} C_j z^j \not\equiv 0$ be an
  analytic matrix-valued function.
  Let $k \in \NN_0$, let $n \in \NN$ be such that $k < n$.
  Let $M_\rho$ be such that
  \[
    M_\rho \ge \min\ab(\max_{\abs{z} =
    \rho} \norm{F(z) - \sum_{j = 0}^{n-1} C_j z^j}, \max_{\abs{z} =
  \rho} \norm{F(z)}).
  \]
  Assume that $C_k$ is invertible.
  Define the truncated Pellet polynomial $\pt_k^n (x) \coloneq
  \sum_{0 \le j <
  n,\, j \neq k} \norm{C_j} x^j - \norm{C_k^{-1}}^{-1} x^k$.
  Then the following assertions hold:
  \begin{enumerate}[label=(\arabic*)]
    \item The polynomial $\pc_k^n (x) \coloneq (1 - x/\rho) \pt_k^n (x)
      + M_\rho (x/\rho)^n$ has either zero, one or two roots in
      $(0, \rho)$, counting multiplicities.
    \item If $\pc_k^n$ has two distinct roots $x_1 < x_2$ in $(0,
      \rho)$, then $\det F$ has exactly $k l$ roots in $\abs{z} \le
      x_1$, counting multiplicities, and has no roots in $x_1 <
      \abs{z} < x_2$.
    \item If $\pc_k^n$ has one root $x_1$ in $(0, \rho)$:
      \begin{enumerate}[label=\arabic*.]
        \item If there exists $x \in (0, x_1)$ such that $\pc_k^n (x)
          < 0$, $\det F$ has $k l$ roots at $z = 0$ counting
          multiplicities (meaning that $z = 0$ is not a root if $k =
          0$), and has no roots in $0 < \abs{z} < x_1$.
        \item If there exists $x \in (x_1, \rho)$ such that $\pc_k^n
          (x) < 0$, $\det F$ has exactly $k l$ roots in $\abs{z} \le
          x_1$, counting
          multiplicities, and has no roots in $x_1 < \abs{z} < \rho$.
      \end{enumerate}
    \item If $\pc_k^n$ has no roots in $(0, \rho)$ and there exists
      $x \in (0, \rho)$ such that $\pc_k^n (x) < 0$, $\det F$
      has $k l$ roots at $z = 0$ counting multiplicities
      (meaning that $z = 0$ is not a root if $k = 0$), and has no roots
      in $0 < \abs{z} < \rho$.
  \end{enumerate}
\end{theorem}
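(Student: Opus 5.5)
The plan is to compare $\pc_k^n$ with $\pl_k$ on $(0,\rho)$ and then repeat the Rouché argument from the proof of \zcref{generalized-pellet-infinite}. The key estimate is $\pl_k(x) \le \pc_k^n(x)/(1 - x/\rho)$ for $x \in (0,\rho)$. By \zcref{cauchy-estimate} we have $\norm{C_j} \le M_\rho \rho^{-j}$ for $j \ge n$. Since $k < n$, the tail of $\pl_k$ satisfies
\[
\sum_{j \ge n} \norm{C_j} x^j \le M_\rho \sum_{j \ge n} (x/\rho)^j = \frac{M_\rho (x/\rho)^n}{1 - x/\rho}.
\]
Hence $\pl_k(x) \le \pt_k^n(x) + M_\rho (x/\rho)^n/(1-x/\rho)$. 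Because $1 - x/\rho > 0$ on $(0,\rho)$, $\pc_k^n(x) < 0$ implies $\pl_k(x) < 0$. At such an $x$, the first paragraph of the proof of \zcref{generalized-pellet-infinite} gives that $\det F$ has exactly $kl$ roots in $\abs{z} < x$. That paragraph used only $\pl_k(x) < 0$ and \zcref{generalized-rouche} on the circle $\abs{z} = x$. Since $x < \rho$, $F$ is analytic on a neighbourhood of $D_x$, so the argument applies.

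For assertion (1), I would count sign changes in the coefficients of $\pc_k^n$. Write $\pt_k^n = \sum_{j<n} a_j x^j$ with $a_k < 0$ and $a_j \ge 0$ for $j \ne k$. The coefficients of $(1 - x/\rho)\pt_k^n$ are $b_j = a_j - a_{j-1}/\rho$. For $j < k$ this can have either sign, so I will not count signs directly. Instead I would use that multiplying by $(1-x/\rho)$ adds exactly one root in $(0,\infty)$, namely $x = \rho$. By Descartes, $\pt_k^n$ has at most two positive roots, so the product has at most three positive roots, one of which is $\rho$. Adding the nonnegative term $M_\rho(x/\rho)^n$ of degree $n$ is the delicate part.

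So the alternative I would pursue first is to rewrite $\pc_k^n/(1-x/\rho)$ on $(0,\rho)$ as a power series. It equals $\pt_k^n(x) + M_\rho\sum_{j\ge n}(x/\rho)^j$, which converges on $(0,\rho)$. Its coefficients have one negative entry ($a_k$) and all others nonnegative, so there are at most two sign changes. Since $1-x/\rho>0$, this series has the same roots in $(0,\rho)$ as $\pc_k^n$, with the same multiplicities. \zcref{descartes-infinite} with this $\rho$ then gives at most two roots, which is assertion (1). The power-series reformulation is the main step. Once it is in place, everything else mirrors the earlier proof.

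For the remaining assertions, I would analyse the sign pattern of $g(x) := \pc_k^n(x)/(1-x/\rho)$ and use the fact that the root count of $\det F$ in $\abs{z}<x$ is $kl$ at every $x$ where $g(x)<0$.

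\textbf{Two distinct roots.} Assertion (2): if there are two distinct roots $x_1<x_2$, they are simple, so the series has two sign changes. Then some $a_j > 0$ with $j<k$, so $g>0$ near $0$, and $g<0$ on $(x_1,x_2)$. Counting $kl$ roots in $\abs{z}<x$ for every $x\in(x_1,x_2)$ gives no roots in the open annulus and $kl$ roots in $\abs{z}\le x_1$.

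\textbf{One root.} Assertion (3): $g$ keeps a constant sign on $(0,x_1)$ and on $(x_1,\rho)$.
\begin{itemize}
\item In case 3.1, $g<0$ on all of $(0,x_1)$. Letting $x\downarrow 0$, all $kl$ roots lie at $0$, and none lie in $0<\abs{z}<x_1$.
\item In case 3.2, $g<0$ on $(x_1,\rho)$, which gives $kl$ roots in $\abs{z}<x$ for every $x\in(x_1,\rho)$. So there are no roots in $x_1<\abs{z}<\rho$ and $kl$ roots in $\abs{z}\le x_1$.
\end{itemize}

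\textbf{No roots.} Assertion (4): if some value of $g$ is negative, then $g<0$ on all of $(0,\rho)$, and the same argument as in case 3.1 applies.
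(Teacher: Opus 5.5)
Your proposal is correct and follows the paper's proof essentially step for step. The paper likewise introduces $\fpc_k^n(x) = \pt_k^n(x) + M_\rho\sum_{j\ge n}(x/\rho)^j = \pc_k^n(x)/(1-x/\rho)$, uses Cauchy's estimate to get $\pl_k \le \fpc_k^n$ on $(0,\rho)$ so that the Rouch\'e argument of the previous theorem applies wherever $\fpc_k^n<0$, and applies Descartes' rule for infinite series to a series with a single negative coefficient. Your explicit constant-sign analysis for assertions (3) and (4), and your remark that multiplicities are preserved under division by $1-x/\rho$, fill in details the paper leaves as ``proved similarly to (2)''.
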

\begin{proof}
  Let $\fpc_k^n$ be defined as
  \begin{equation}
    \fpc_k^n (x) \coloneq \pt_k^n (x) + M_\rho \sum_{j = n}^\infty (x /
    \rho)^j = \pt_k^n (x) + M_\rho \frac{(x / \rho)^n}{1 - x / \rho} =
    \frac{\pc_k^n (x)}{1 - x/\rho}.
    \label{fpc-def}
  \end{equation}
  The denominator $(1 - x/\rho)$ is positive in $(0, \rho)$.
  Therefore, $\pc_k^n$ and $\fpc_k^n$ have the same signs in $(0,
  \rho)$ and have common roots in $(0, \rho)$.
  Hence, it suffices to prove (1)--(4) with $\pc_k^n$
  replaced by $\fpc_k^n$.

  Assume $\fpc_k^n (x) < 0$. Then \zcref{cauchy-estimate} implies
  \begin{equation}
    \pl_k (x) = \pt_k^n (x) + \sum_{j = n}^\infty \norm{C_j} x^j < 0.
    \label{generalized-pellet-cauchy-sufficient}
  \end{equation}
  Therefore, by the same argument as in the proof of
  \zcref{generalized-pellet-infinite}, $\det F$ has
  exactly $k l$ roots in $\abs{z} < x$.

  Since the $k$-th coefficient of $\fpc_k^n$ is negative and the
  other coefficients of $\fpc_k^n$ are nonnegative, the
  number of sign
  changes of the coefficients of $\fpc_k^n$ is either zero, one or two.
  Therefore, by \zcref{descartes-infinite}, the number of roots
  of $\fpc_k^n$ in $(0, \rho)$ is either
  zero, one or two, counting multiplicities.
  This proves (1).

  Assume that either $\pc_k^n$ or $\fpc_k^n$ has two
  distinct roots $x_1 < x_2$ in $(0, \rho)$.
  Then \zcref{descartes-infinite} implies $\fpc_k^n$ has two
  sign changes.
  Therefore, for sufficiently small $x > 0$, $\fpc_k^n (x) > 0$ holds.
  Hence, $\fpc_k^n (\abs{z}) < 0$ holds for all $z$ such that $x_1 <
  \abs{z} < x_2$.
  In this region, the number of roots of $\det F(z)$ does not change,
  meaning that $\det F$ has no roots in $x_1 < \abs{z} < x_2$ and
  therefore has exactly $k l$ roots in $\abs{z} \le x_1$. This proves (2).
  The assertions (3) and (4) may be proved similarly to (2).
\end{proof}

How should the positive roots of $\pc_k^n$ be located?
By analogy with \textcite{melman_implementation_2014}, the following
result may be proved.

\begin{theorem}
  Let $\rho > 0$, and let $F$ be an analytic matrix-valued function.
  Let $k \in \NN_0$, $n \in \NN$, $M_\rho \ge 0$ be chosen as in
  \zcref{generalized-pellet-cauchy}.
  Let the function $\fpc_k^n$ be defined as in \zcref{fpc-def}.
  Then the function $x^{-k} \fpc_k^n (x)$ is convex in $(0,
  \rho)$ (i.e., its second derivative is nonnegative).
  Furthermore, if $M_\rho > 0$ or $C_j \neq 0$ for some $j$
  with $0 \leq j < n$ and $j \notin \ab\{k, k+1\}$, then $x^{-k}
  \fpc_k^n (x)$ is strictly convex in $(0,
  \rho)$, and its second
  derivative is positive in $(0, \rho)$.
\end{theorem}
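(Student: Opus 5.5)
We expand $x^{-k}\fpc_k^n(x)$ into a sum of power functions and differentiate twice term by term. By \zcref{fpc-def}, on $(0,\rho)$ we have
\[
  g(x) \coloneq x^{-k}\fpc_k^n(x) = \sum_{0\le j<n,\, j\neq k} \norm{C_j} x^{j-k} - \norm{C_k^{-1}}^{-1} + M_\rho \sum_{j=n}^\infty \rho^{-j} x^{j-k}.
\]
The constant term $-\norm{C_k^{-1}}^{-1}$ vanishes under differentiation. Every other term has the form $a\,x^{m}$ with $a\ge 0$ and $m = j-k \in \ZZ\setminus\{0\}$.

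The key elementary fact is that
\[
  \frac{\mathrm{d}^2}{\mathrm{d}x^2} x^m = m(m-1)x^{m-2}.
\]
For $x>0$ this is $\ge 0$ for every integer $m$: if $m\ge 2$ or $m\le -1$, then $m(m-1)>0$, while for $m\in\{0,1\}$ it equals zero. The exponent $m=0$ corresponds to $j=k$, which has already been removed, and $m=1$ corresponds to $j=k+1$. Consequently every term contributes a nonnegative second derivative on $(0,\rho)$, and that contribution is strictly positive unless $j=k+1$ or the coefficient is zero.

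The only point needing care is the infinite tail, and justifying term-by-term differentiation there is the main (mild) obstacle. The tail $M_\rho \sum_{j\ge n} \rho^{-j}x^{j-k}$ equals $x^{-k}$ times a power series with radius of convergence $\rho$. Power series may be differentiated termwise inside their disk of convergence, and multiplication by $x^{-k}$ is smooth on $(0,\rho)$, so the series of second derivatives converges to $g''$ on $(0,\rho)$. Alternatively, we can avoid series entirely by using the closed form $M_\rho\,\rho^{-n} x^{n-k}/(1-x/\rho)$ from \zcref{fpc-def}. Writing $x^{n-k}/(1-x/\rho) = \sum_{i\ge 0}\rho^{-i}x^{n-k+i}$ with $n-k\ge 1$ gives the same conclusion. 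Since all summands of $g''$ are nonnegative, $g''\ge 0$, which proves convexity.

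For strict convexity, suppose first that $M_\rho>0$. The tail then contains the terms with $j\ge \max(n,k+2)$, each with positive coefficient and $m\ge 2$, so these give strictly positive contributions. Such $j$ exist because the tail is infinite. Suppose instead that $C_j\neq 0$ for some $0\le j<n$ with $j\notin\{k,k+1\}$. Then $\norm{C_j}>0$ and $m=j-k\notin\{0,1\}$, so $\norm{C_j}\,m(m-1)x^{m-2}>0$. In either case $g''(x)>0$ for all $x\in(0,\rho)$, and a function with positive second derivative on an interval is strictly convex there.
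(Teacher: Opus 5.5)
Your proposal is correct and is essentially the paper's proof: the paper writes down the termwise second derivative of $x^{-k}\fpc_k^n(x)$ and observes that $(j-k)(j-k-1)>0$ for every $j\notin\{k,k+1\}$. Your remark justifying termwise differentiation of the tail, which is a genuine power series because $n-k\ge 1$, fills in a step the paper leaves implicit.
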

\begin{proof}
  This holds since
  \[
    (x^{-k} \fpc_k^n (x))'' = \sum_{0 \leq j < n, j \notin \ab\{k,
    k+1\}} (j - k)(j - k - 1) \norm{C_j} x^{j - k - 2} + M_\rho
    \sum_{j = n}^\infty (j - k)(j - k - 1) \rho^{-j} x^{j - k - 2}
  \]
  and $(j - k)(j - k - 1) > 0$ for all $j \in \NN_0 \setminus \ab\{k, k+1\}$.
\end{proof}

Therefore, a point $x^* \in (0, \rho)$ close enough to the
unique minimizer of $x^{-k} \fpc_k^n
(x)$ in $(0, \rho)$ if it exists, or close enough to $0$ if $x^{-k}
\fpc_k^n (x)$ is increasing in $(0, \rho)$, or close enough to $\rho$
if $x^{-k} \fpc_k^n (x)$ is decreasing in $(0, \rho)$, can be
obtained by (convex) optimization methods.
If $\fpc_k^n (x^*) > 0$, the roots of $\pc_k^n$ in $(0, \rho)$
do not exist or cannot be located.
If $\fpc_k^n (x^*) < 0$, the roots of $\pc_k^n$ in
$(0, \rho)$ may be located by applying (convex) root-finding
methods in $(0, x^*)$ and $(x^*, \rho)$.

\section{Verified computation of Fourier coefficients}
\label{chap-verified-fourier-coefficients}

It is known that the Fourier coefficients of an analytic
function may be approximated by the discrete Fourier transform with
the following error estimate.
Hence, given that an interval arithmetic implementation of an
analytic function is available, the interval enclosures of its Fourier
coefficients may be calculated.
Moreover, by applying this result to each entry of a
matrix-valued analytic function, the interval enclosures of its
Fourier coefficients may be obtained.

\begin{theorem}[Error estimate for discrete Fourier coefficients]
  \label{ck-error}
  Let $\Rdft > \rdft > 0$, let $f: D_{\Rdft} \to \CC, z
  \mapsto \sum_{j
  \in \NN_0} c_j z^j$
  be analytic, and let $\ab(\tilde{c}_{j, \rdft})_{j =
  0}^{\ndft - 1}$ be the discrete Fourier coefficients of $f$
  using $\ndft \in \NN$ sampling points and radius $\rdft > 0$, defined as
  \begin{equation}
    \tilde{c}_{j,\rdft} \coloneq \frac{1}{\ndft \rdft^j} \sum_{j' =
    0}^{\ndft-1} f\ab(\rdft \exp\ab(\frac{2 \pi i j'}{\ndft}))
    \exp\ab(-\frac{2 \pi i j j'}{\ndft}).
    \label{dft-definition}
  \end{equation}
  Let $M_{\Rdft} \coloneq \max_{\abs{z} = \Rdft} \abs{f(z)}$.
  Then for all $j \in \ab\{0, \dots, \ndft - 1\}$,
  \[
    \abs{\tilde{c}_{j,\rdft} - c_j} \le M_{\Rdft} \Rdft^{-j} \frac{\ab(\rdft /
    \Rdft)^{\ndft}}{1 - \ab(\rdft / \Rdft)^{\ndft}}.
  \]
\end{theorem}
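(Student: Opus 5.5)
The plan is the standard aliasing argument. Since $f$ is analytic on $D_{\Rdft}$, its Taylor series $f(z)=\sum_{m\in\NN_0} c_m z^m$ converges absolutely and uniformly on $\abs{z}=\rdft<\Rdft$. I will substitute $z=\rdft\exp(2\pi i j'/\ndft)$ into \zcref{dft-definition} and exchange the finite sum over $j'$ with the infinite sum over $m$. This gives
\[
\tilde{c}_{j,\rdft}=\frac{1}{\rdft^{j}}\sum_{m\in\NN_0} c_m\rdft^{m}\,\frac{1}{\ndft}\sum_{j'=0}^{\ndft-1}\exp\ab(\frac{2\pi i (m-j) j'}{\ndft}).
\]

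Next I will apply the discrete orthogonality relation. The inner average equals $1$ if $m\equiv j \pmod{\ndft}$ and $0$ otherwise. Because $0\le j\le \ndft-1$, the surviving indices are exactly $m=j+s\ndft$ with $s\in\NN_0$. This yields the aliasing identity
\[
\tilde{c}_{j,\rdft}-c_j=\sum_{s=1}^{\infty} c_{j+s\ndft}\,\rdft^{s\ndft}.
\]

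To bound the right-hand side, I will apply \zcref{cauchy-estimate} with $l=1$, $\rho=\Rdft$, $n=0$ and $M_\rho=M_{\Rdft}$, which gives $\abs{c_{j+s\ndft}}\le M_{\Rdft}\Rdft^{-j-s\ndft}$. Substituting this in,
\[
\abs{\tilde{c}_{j,\rdft}-c_j}\le M_{\Rdft}\Rdft^{-j}\sum_{s=1}^\infty\ab(\rdft/\Rdft)^{s\ndft}.
\]
Since $\rdft/\Rdft<1$, the geometric series sums to $\ab(\rdft/\Rdft)^{\ndft}/\ab(1-\ab(\rdft/\Rdft)^{\ndft})$, which is the claimed bound.

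The only delicate point is justifying the interchange of summations, which I expect to be a minor obstacle. It holds because the sum over $j'$ is finite and each Taylor series converges absolutely at the sample points, whose modulus is $\rdft<\Rdft$. I also need to check that $\Rdft$ lies inside the domain of analyticity so that \zcref{cauchy-estimate} applies. This follows from the paper's convention that analyticity on a closed disk means analyticity on an open neighborhood of it.
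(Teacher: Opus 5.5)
Your proof is correct and follows essentially the paper's route. The paper cites the DFT error estimate of Niu et al.\ for the rescaled function $z \mapsto f(\rdft z)$ and combines it with Cauchy's estimate on the disk of radius $\Rdft/\rdft$; this is your aliasing identity plus \zcref{cauchy-estimate} at radius $\Rdft$, written without the rescaling and with the aliasing step derived directly rather than imported.
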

\begin{proof}
  This follows from the error estimate
  in~\parencite[(3.5)]{niu_verified_2007} applied to $z \mapsto f(\rdft
  z)$ with Cauchy's estimate on the disk of radius $\Rdft / \rdft$.
\end{proof}

\section{Preconditioning}
\label{chap-preconditioning}

For the case of scalar (or $1 \times 1$ matrix-valued) functions,
the following \dlg\ method,
which raises the roots of a function to the $G$-th power
for any specified $G \in \NN$, is applied to guarantee the existence
of Pellet bounds and further improve them.

\begin{theorem}[\dlg~\parencite{householder_dandelin_1959}]
  Let $\rho > 0$, let $f: D_\rho \to \CC$ be analytic and $f
  \not\equiv 0$, and
  let $G \in \NN$.
  Let $\omega \coloneq \exp\ab(2 \pi i/G)$, let $\gr^G (f) (z^G) \coloneq
  \prod_{m = 0}^{G - 1} f(\omega^{m} z)$.
  Let $\ab(Z_j)_{j = 1}^p$ be the roots of $f$ in $D_\rho$,
  counting multiplicities.
  Then $\gr^G (f): D_{\rho^G} \to \CC$ is analytic and has
  roots $\ab(Z_j^G)_{j=1}^p$, counting multiplicities.
\end{theorem}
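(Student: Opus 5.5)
We first show that $\gr^G(f)$ is well defined and analytic on $D_{\rho^G}$, and then count its roots by factoring out the roots of $f$.

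\textbf{Step 1: $g(z) \coloneq \prod_{m=0}^{G-1} f(\omega^m z)$ is analytic and even in the right sense.} Since $f$ is analytic on an open disk $\abs{z} < \rho'$ with $\rho' > \rho$, so is $g$. Replacing $z$ by $\omega z$ only permutes the factors cyclically, so $g(\omega z) = g(z)$. Write $g(z) = \sum_j b_j z^j$. Comparing coefficients gives $b_j \omega^j = b_j$, hence $b_j = 0$ unless $G \mid j$. Define $h(w) \coloneq \sum_{i} b_{iG} w^i$. Its radius of convergence is the $G$-th power of that of $g$, so $h$ is analytic on $\abs{w} < \rho'^G$, which contains $D_{\rho^G}$. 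By construction $h(z^G) = g(z)$, so $h = \gr^G(f)$ is well defined. Also $h \not\equiv 0$, because $g \not\equiv 0$ as a product of nonzero analytic functions on a connected set.

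\textbf{Step 2: factor out the roots.} Write $f(z) = u(z)\prod_{j=1}^p (z - Z_j)$, where $u$ is analytic and has no roots in $D_\rho$. Then
\[
  g(z) = \prod_{m=0}^{G-1} u(\omega^m z) \prod_{j=1}^p \prod_{m=0}^{G-1} (\omega^m z - Z_j).
\]
For the polynomial part we use the identity $\prod_{m}(\omega^m z - Z_j) = \pm (z^G - Z_j^G)$, which comes from $\prod_m (X - \omega^{-m} Y) = X^G - Y^G$ after pulling out $\prod_m \omega^m$. The sign is $(-1)^{G-1}$, a unit. For the remaining factor set $v(z) \coloneq \prod_m u(\omega^m z)$. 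It has no roots in $\abs{z} \le \rho$ and is $\omega$-invariant, so Step 1 gives an analytic $\tilde v$ with $\tilde v(z^G) = v(z)$. Since $w \mapsto w^{1/G}$ takes every point of $D_{\rho^G}$ to some preimage in $D_\rho$, $\tilde v$ has no roots in $D_{\rho^G}$. Hence
\[
  h(w) = \pm\, \tilde v(w) \prod_{j=1}^p (w - Z_j^G),
\]
and the roots of $h$ in $D_{\rho^G}$, counted with multiplicity, are exactly $(Z_j^G)_{j=1}^p$. Each $Z_j^G$ does lie in $D_{\rho^G}$ because $\abs{Z_j} \le \rho$.

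\textbf{Main obstacle.} The only delicate point is Step 1: showing that $h$ is well defined and analytic on a neighbourhood of $D_{\rho^G}$, including the radius-of-convergence comparison. Defining $h$ through the power series in $w = z^G$ settles this and avoids any branch issues. Multiplicities also need care. Here the factorization of Step 2 handles them automatically, with no derivative computations.
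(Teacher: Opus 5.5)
The paper does not prove this statement. It is quoted from \textcite{householder_dandelin_1959}, so there is no argument in the paper to measure yours against.

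Your proof is correct and complete. The invariance $g(\omega z) = g(z)$ forces the Taylor coefficients $b_j$ to vanish unless $G$ divides $j$. The Cauchy--Hadamard comparison then makes $h$ analytic on $|w| < \rho'^G \supset D_{\rho^G}$. Finally, the identity $\prod_{m=0}^{G-1}(\omega^m z - Z_j) = (-1)^{G-1}(z^G - Z_j^G)$ together with the zero-free factor $\tilde v$ gives $h(w) = (-1)^{p(G-1)}\,\tilde v(w)\prod_{j=1}^p (w - Z_j^G)$. From this the roots and their multiplicities can be read off directly.

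Your two steps are essentially the mechanisms the paper relies on later without proof. Step 1 is the same root-of-unity cancellation that underlies \zcref{graeffe-log}: only exponents divisible by $G$ survive, there via $\sum_m \omega^{jm}$. Step 2 is the splitting $f = qh$ combined with multiplicativity of $\gr^G$, as used in \zcref{gr-coef-bounds}.

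Your explicit sign also explains why that lemma asserts only $|\coef{\gr^G q}_p| = 1$ rather than $\coef{\gr^G q}_p = 1$. The polynomial $\gr^G q$ is monic only up to the factor $(-1)^{p(G-1)}$.

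Two routine points are left implicit in your write-up:
\begin{itemize}
\item The existence of $\rho' > \rho$ follows from the paper's convention that analyticity on $D_\rho$ means analyticity on an open set containing it, together with compactness of $D_\rho$.
\item The factorization $f = u\prod_j (z - Z_j)$ with $u$ zero-free on $D_\rho$ uses two facts. First, $f \not\equiv 0$ has only finitely many roots in $D_\rho$. Second, the quotient has removable singularities with nonzero values at the $Z_j$.
\end{itemize}
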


It follows directly from this definition that $\gr^G (1) = 1$
and that $\gr^G$ is a
multiplicative homomorphism, i.e., $\gr^G (f g) = \gr^G
(f) \gr^G (g)$.

In the remainder of this section, the convergence of Pellet bounds
after applying the \dlg\ method is proved,
although these results are not necessarily useful for the
numerical computation of Pellet bounds themselves.

To prove the convergence, it needs to be shown that after applying
the \dlg\ method with sufficiently large
order $G$, the $k$-th term of $\pl_k$ or $\fpc_k^n$
becomes dominant in
some region (which might be the Pellet bound), for the specified $k$ and $n$.
Hence, each coefficient of $\gr^G f$ needs to be roughly estimated.
To do so, instead of directly estimating the coefficients of $\gr^G
f$, the coefficients of $\gr^G (\exp \circ f)$ are estimated using
the following property:

\begin{lemma}
  \label{graeffe-log}
  Let $\rho > 0$, let $f: D_\rho \to \CC, z \mapsto \sum_{j \in
  \NN_0} c_j z^j$
  be analytic. Then $\gr^G (\exp \circ f) (z) = \exp\ab(G \sum_{j
  = 0}^\infty c_{G j} z^{j})$.
\end{lemma}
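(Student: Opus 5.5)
We prove the identity by computing $\gr^G(\exp\circ f)$ directly from its definition and then using the orthogonality of the $G$-th roots of unity. By the definition of the \dlg\ transform, with $\omega = \exp(2\pi i/G)$ and $w$ in a disk of radius less than $\rho$, we have
\[
  \gr^G(\exp\circ f)(w^G) = \prod_{m=0}^{G-1} \exp\ab(f(\omega^m w)) = \exp\ab(\sum_{m=0}^{G-1} f(\omega^m w)),
\]
since $\exp$ turns the finite product into a sum in the exponent.

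Next we expand each $f(\omega^m w) = \sum_{j} c_j \omega^{mj} w^j$. This power series converges absolutely because $\abs{\omega^m w} = \abs{w} < \rho$. The sum over $m$ is finite, so we may swap it with the series to get
\[
  \sum_{m=0}^{G-1} f(\omega^m w) = \sum_{j=0}^\infty c_j w^j \sum_{m=0}^{G-1} \omega^{mj}.
\]
The inner geometric sum equals $G$ when $G \mid j$, since then $\omega^j = 1$. Otherwise it equals $(1-\omega^{jG})/(1-\omega^j) = 0$. Only the terms $j = Gj'$ survive, so the exponent becomes $G\sum_{j'} c_{Gj'} (w^G)^{j'}$.

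Setting $z = w^G$ gives $\gr^G(\exp\circ f)(z) = \exp\ab(G\sum_{j} c_{Gj} z^{j})$, which is the claimed identity. The only point needing care is that $\gr^G(\exp\circ f)$ is well defined as a function of $z = w^G$, i.e. the right-hand side does not depend on which $G$-th root $w$ of $z$ is chosen. This is exactly what the computation shows: the final expression depends on $w$ only through $w^G$. Equivalently, the product is invariant under $w \mapsto \omega w$, which just permutes its factors.

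The series $\sum_j c_{Gj} z^j$ converges for $\abs{z} < \rho^G$, because it is a subseries of an absolutely convergent series evaluated at $w$ with $\abs{w} < \rho$. Hence both sides are analytic on the open disk, which matches the domain $D_{\rho^G}$ in the \dlg\ theorem. Analyticity on a neighborhood of the closed disk follows in the same way from analyticity of $f$ on a slightly larger disk.
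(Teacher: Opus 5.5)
Your proposal is correct and follows the same computation as the paper's proof: the exponential turns the finite product into a sum, the finite sum over $m$ is swapped with the power series, the roots-of-unity sum removes every term with $j$ not divisible by $G$, and then you substitute $z = w^G$. Your added remarks on well-definedness under $w \mapsto \omega w$ and on convergence of $\sum_j c_{Gj} z^j$ for $\abs{z} < \rho^G$ spell out points the paper leaves implicit.
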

\begin{proof}
  By definition, one has
  \begin{align*}
    \gr^G (\exp \circ f) (z^G) &= \prod_{m = 0}^{G - 1}
    \exp\ab(f(\omega^{m} z))
    = \exp\ab(\sum_{m = 0}^{G - 1} f(\omega^{m} z))
    = \exp\ab(\sum_{m = 0}^{G - 1} \sum_{j = 0}^\infty c_j
    (\omega^{m} z)^j) \\
    &= \exp\ab(\sum_{j = 0}^\infty c_j z^j \sum_{m = 0}^{G - 1}
    \omega^{j m})
    = \exp\ab(G \sum_{j = 0}^\infty c_{G j} z^{G j}).
  \end{align*}
\end{proof}

The following lemmas are also used in the proof of convergence.

\begin{lemma}
  \label{exp-inequality}
  Let $\alpha > 0$, $\beta > 0$.
  If $x > 2/(\beta \alpha^2)$, then $x \exp \ab(-\alpha x) < \beta$.
\end{lemma}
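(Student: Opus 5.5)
We bound $x e^{-\alpha x}$ from above by a reciprocal, using the elementary inequality $e^{y} \ge 1 + y + y^2/2$, valid for $y \ge 0$. In particular $e^{y} > y^2/2$ for every $y > 0$, since the omitted terms $1 + y$ are positive.

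First, apply this with $y = \alpha x > 0$, which is legitimate because $x > 2/(\beta\alpha^2) > 0$. This gives $e^{\alpha x} > \alpha^2 x^2/2$. Taking reciprocals, both sides being positive, yields
\[
  x \exp(-\alpha x) < x \cdot \frac{2}{\alpha^2 x^2} = \frac{2}{\alpha^2 x}.
\]

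Second, the hypothesis $x > 2/(\beta \alpha^2)$ is equivalent to $2/(\alpha^2 x) < \beta$, because $\alpha$, $\beta$ and $x$ are all positive. Chaining the two inequalities gives $x \exp(-\alpha x) < \beta$.

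The only point that needs care is confirming that every quantity we divide by or invert is strictly positive, and this follows directly from $\alpha, \beta > 0$ and the lower bound on $x$. In the rest of the section, this lemma will presumably be used to show that terms of the form $G \cdot(\text{ratio})^{G}$, written as $x e^{-\alpha x}$ with $x = G$, become smaller than any prescribed $\beta$ once the Graeffe order $G$ is large enough.
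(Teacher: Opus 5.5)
Your proof is correct and matches the paper's argument: both use $e^{\alpha x} > (\alpha x)^2/2$ to get $x e^{-\alpha x} < 2/(\alpha^2 x)$, and then rewrite the hypothesis as $2/(\alpha^2 x) < \beta$. Your guess about the later use is also right: the paper applies the lemma with $x = G$ and $\alpha = 1/L$ when bounding the coefficients of $\gr^G h$.
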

\begin{proof}
  Since $\exp\ab(\alpha x) > (\alpha x)^2 / 2$, $x \exp\ab(-\alpha x) < 2 /
  (\alpha^2 x)$ holds. Therefore, if $2/(\alpha^2 x) < \beta$, $x
  \exp\ab(-\alpha x) < \beta$ holds.
\end{proof}

\begin{lemma}
  \label{geometric-series-inequality}
  Let $r \in (0, 1)$, $n \in \NN$.
  If $n > (\log 2)/(-\log r)$, then $1/(1-r^n) < 2$.
\end{lemma}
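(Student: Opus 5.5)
The inequality $1/(1-r^n) < 2$ is equivalent to $r^n < 1/2$, because $r^n \in (0,1)$ makes $1 - r^n$ positive. So I will reduce the statement to showing $r^n < 1/2$, and derive that by taking logarithms in the hypothesis.

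Since $r \in (0,1)$, we have $\log r < 0$, so $-\log r > 0$. Multiplying the hypothesis $n > (\log 2)/(-\log r)$ by the positive number $-\log r$ preserves the inequality and gives
\[
  -n \log r > \log 2, \quad\text{that is,}\quad n \log r < -\log 2 = \log(1/2).
\]
Applying the strictly increasing function $\exp$ yields $r^n = \exp(n \log r) < 1/2$.

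It remains to pass from $r^n < 1/2$ back to the original inequality. We get $1 - r^n > 1/2 > 0$, and taking reciprocals of positive numbers reverses the inequality, so $1/(1 - r^n) < 2$.

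The only point that needs care is the sign bookkeeping: $\log r$ is negative, so the positive quantity $-\log r$ is what we divide and multiply by, and positivity of $1 - r^n$ is needed before inverting. The argument uses nothing beyond monotonicity of $\log$ and $\exp$. The assumption $n \in \NN$ is not needed, since the same reasoning works for any real $n$ satisfying the hypothesis.
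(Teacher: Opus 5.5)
Your proof is correct and matches the paper's argument essentially step for step: multiply by $-\log r > 0$ to get $n \log r < -\log 2$, exponentiate to get $r^n < 1/2$, then conclude $1 - r^n > 1/2$. Your explicit check that $1 - r^n > 0$ before inverting, and your remark that integrality of $n$ is never used, are both accurate.
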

\begin{proof}
  If $n > (\log 2)/(-\log r)$, $n \log r < - \log 2$ holds.
  By taking the exponential of both sides, $r^n < 1/2$ holds, and $1
  - r^n > 1/2$ holds.
\end{proof}

In what follows, the $j$-th coefficient of an analytic function
$f$ is denoted
by $\coef{f}_j$. The coefficients of $\gr^G f$ are estimated as follows.

\begin{lemma}
  \label{gr-coef-bounds}
  Let $\xh > 0$, let $f: D_{\xh} \to \CC, z \mapsto \sum_{j \in
  \NN_0} c_j z^j \not\equiv 0$ be analytic. Assume further that
  $f$ has exactly $p \in \NN_0$ roots $\ab(Z_j)_{j = 1}^p$
  in $D_{\xh}$, counting
  multiplicities, where the roots are ordered
  as $\abs{Z_1} \le \abs{Z_2} \le \dots \le \abs{Z_p}$. Let
  \begin{equation*}
    \xmin \coloneq
    \begin{cases} \abs{Z_p} & (p \in \NN), \\ 0 & (p = 0).
    \end{cases}
  \end{equation*}
  Let $\xeval, \xc$ satisfy $\xmin < \xeval < \xc < \xh$.
  Let $\delta \coloneq \max\ab(\xmin / \xeval, \xeval / \xc, \xc / \xh) \in
  (0, 1)$, let $L \coloneq (-\log \delta)^{-1}$.
  Let $q(z) \coloneq \prod_{j = 1}^p (z - Z_j), h(z) \coloneq f(z)/q(z)$. Let $G
  \in \NN$, let $\Kg \coloneq \abs{h(0)}^G$, and let
  $M_{\xh} \coloneq
  \max_{\abs{z} = \xh} \abs{\log h(z)}$.
  Then for sufficiently large $G$,
  more precisely if $G > \max\ab(L (p + 1) \log 2, (4 M_{\xh}
  L^2)/(\log 2))$,
  the following bounds hold:
  \begin{enumerate}[label=(\arabic*)]
    \item (Lower bound for $j = p$). $\abs{\coef{\gr^G f}_p} \Kg^{-1}
      \ge 1 - 2^{p + 1} \delta^{G}.$
    \item (Upper bounds). $\abs{\coef{\gr^G f}_j} \xeval^{G(j - p)}
      \Kg^{-1} \le 2^{p + 1} (p + 1) \delta^{G \abs{j - p}},
      \quad j \in \NN_0.$
  \end{enumerate}
\end{lemma}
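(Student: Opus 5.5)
The plan is to factor $f = q h$ and use that $\gr^G$ is multiplicative, so $\gr^G f = \gr^G q \cdot \gr^G h$. We then estimate the coefficients of each factor separately and combine them by the Cauchy product.

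\emph{The polynomial factor.} For each root $Z$, $\prod_{m=0}^{G-1}(\omega^m z - Z) = \pm (z^G - Z^G)$. Hence $\gr^G q(w) = \pm \prod_{j=1}^p (w - Z_j^G)$. Writing $a_i \coloneq \coef{\gr^G q}_i$, this gives $\abs{a_p} = 1$. It also gives $a_i = \pm e_{p-i}(Z_1^G,\dots,Z_p^G)$ for $i < p$, so
\[
  \abs{a_i} \le \binom{p}{i} \xmin^{G(p-i)},
\]
and $a_i = 0$ for $i > p$. When $p = 0$ we simply have $\gr^G q = 1$.

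\emph{The zero-free factor.} Since $h$ has no roots in $D_{\xh}$, $g \coloneq \log h$ is analytic there, and $\abs{g} \le M_{\xh}$ on $\abs{z} = \xh$. By \zcref{graeffe-log},
\[
  \gr^G h(w) = \exp\ab(G \textstyle\sum_{j \ge 0} \coef{g}_{Gj} w^j) = h(0)^G \exp(u(w)),
  \qquad u(w) \coloneq G \textstyle\sum_{j\ge1} \coef{g}_{Gj} w^j .
\]
The scalar case of \zcref{cauchy-estimate} gives $\abs{\coef{g}_{Gj}} \le M_{\xh} \xh^{-Gj}$. So on $\abs{w} = \xc^G$ we get
\[
  \abs{u} \le G M_{\xh} \frac{\delta^G}{1-\delta^G}.
\]
We control this using the two hypotheses on $G$:
\begin{itemize}
  \item Since $G > L(p+1)\log 2 \ge L \log 2$, \zcref{geometric-series-inequality} gives $1/(1-\delta^G) < 2$.
  \item Since $G > 4 M_{\xh} L^2/\log 2$, \zcref{exp-inequality} with $\alpha = 1/L$ and $\beta = \log 2/(2M_{\xh})$ gives $G\delta^G < \log 2 /(2 M_{\xh})$.
\end{itemize}
Together these yield $\abs{u} < \log 2$ on $\abs{w} = \xc^G$, hence $\abs{\exp u} \le 2$ there. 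Writing $b_i \coloneq \coef{\exp u}_i$, Cauchy's estimate on that circle gives $b_0 = 1$ and $\abs{b_i} \le 2 \xc^{-Gi}$ for all $i$. The case $M_{\xh} = 0$ is trivial, since then $u \equiv 0$.

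\emph{Combining.} We have $\coef{\gr^G f}_j \Kg^{-1} = \pm \sum_{i=0}^{\min(j,p)} a_i b_{j-i}$, up to a unimodular factor $h(0)^G/\abs{h(0)}^G$.

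For (1), isolate the term $a_p b_0$, which has modulus $1$. Each remaining term with $i < p$ is bounded by
\[
  2\binom{p}{i} (\xmin/\xc)^{G(p-i)} \le 2\binom{p}{i}\delta^{2G(p-i)} \le 2\binom{p}{i}\delta^G .
\]
Summing over $i<p$, the binomials total at most $2^p$, which gives $\ge 1 - 2^{p+1}\delta^G$.

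For (2), split $\xeval^{G(j-p)} = \xeval^{G(j-i)}\xeval^{-G(p-i)}$. The $i$-th term is then bounded by
\[
  2\binom{p}{i}(\xmin/\xeval)^{G(p-i)}(\xeval/\xc)^{G(j-i)} \le 2\binom{p}{i}\delta^{G(p+j-2i)} .
\]
Since $i \le \min(j,p)$, we have $p+j-2i \ge \abs{j-p}$, so each term is at most $2\binom{p}{i}\delta^{G\abs{j-p}}$. Summing over $i$ gives at most $2^{p+1}\delta^{G\abs{j-p}}$, which is within the claimed $2^{p+1}(p+1)\delta^{G\abs{j-p}}$.

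\emph{Main obstacle.} The delicate step is bounding $\exp u$ uniformly on the circle of radius $\xc^G$. This requires turning the thresholds on $G$ into $\abs{u} < \log 2$ via the two auxiliary lemmas, and the constants must be matched carefully there. The remaining bookkeeping, including the degenerate cases $p=0$ and $\xmin = 0$ where only $a_0 = 1$ survives, is routine.
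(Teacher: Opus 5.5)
Your proof is correct and follows the paper's argument: the factorization $f = qh$, the Vieta bounds for $\gr^G q$, \zcref{graeffe-log} combined with \zcref{geometric-series-inequality} and \zcref{exp-inequality} to get $\max_{\abs{w}=\xc^G}\abs{\gr^G h(w)} \le 2\Kg$, and the Cauchy-product estimates are all the same. The differences are cosmetic: you normalize by $h(0)^G$ and work with $\exp u$, and in (2) you sum the binomial coefficients directly, which gives the slightly sharper constant $2^{p+1}$ instead of the paper's $2^{p+1}(p+1)$.
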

\begin{proof}
  The proof is organized as follows:
  $f$ is split into $q$ and $h$, and the coefficients of $\gr^G q$
  and $\gr^G h$ are estimated separately, then combined to estimate
  the coefficients of $\gr^G f$.
  \begin{enumerate}[label=\arabic*.]
    \item (Bounds for $\coef{\gr^G q}$).
      \begin{enumerate}[label=(\alph*)]
        \item (Exact value for $j = p$). Since $q$ is a monic
          polynomial of degree $p$,
          \begin{equation}
            \abs{\coef{\gr^G q}_p} = 1.
            \label{gr-q-coef-p}
          \end{equation}
        \item (Upper bound). Vieta's formulas imply
          \begin{equation}
            \abs{\coef{\gr^G q}_j} \le \binom{p}{j} \xmin^{G(p - j)},
            \quad j \in \{0, \dots, p\}.
            \label{gr-q-coef-j}
          \end{equation}
      \end{enumerate}
    \item (Upper bound for $\coef{H}$).
      Let $H(z) \coloneq \log h(z)$.
      Since $h$ has no roots in $\abs{z} \le \xh$, $H$ is analytic in
      $\abs{z} \le \xh$. Thus \zcref{cauchy-estimate} implies
      \begin{equation}
        \abs{\coef{H}_j} \le M_{\xh} \xh^{-j}, \quad j \in \NN_0.
        \label{H-coefficients}
      \end{equation}
    \item (Bounds for $\coef{\gr^G h}$).
      \begin{enumerate}[label=(\alph*)]
        \item (Exact value for $j = 0$). Since $\coef{\gr^G
          h}_0 = (\gr^G h)(0) = h(0)^G$, $\abs{\coef{\gr^G h}_0} = \Kg$ holds.
        \item (Upper bounds).
          \zcref{cauchy-estimate} again implies
          \begin{equation}
            \abs{\coef{\gr^G h}_j} \le M_{\xc}^{(G)} \xc^{- G j},
            \quad j \in \NN_0,
          \end{equation}
          where
          \[
            M_{\xc}^{(G)} \coloneq \max_{\abs{z} = \xc^G}
            \abs{\gr^G h(z)}.
          \]
          \zcref{graeffe-log} and \zcref{H-coefficients} imply
          \begin{align*}
            M_{\xc}^{(G)}
            &= \max_{\abs{z} = \xc^G} \abs{\exp\ab(G \ab(\coef{H}_0 +
            \sum_{j = 1}^\infty \coef{H}_{G j} z^{j}))} \\
            &\le \Kg \exp\ab(M_{\xh} G \sum_{j = 1}^\infty \xh^{-G
            j} \xc^{G j}) \\
            &\le \Kg \exp\ab(\frac{G M_{\xh} \delta^G}{1 - \delta^G}).
          \end{align*}
          \zcref{geometric-series-inequality} implies that if $G > L \log
          {2}$, $M_{\xc}^{(G)} \le \Kg \exp\ab(2 G M_{\xh} \delta^G)$
          holds. If $M_{\xh} \neq 0$, since $\delta^G = \exp\ab(-G / L)$,
          \zcref{exp-inequality} implies that by further choosing $G >
          (4 M_{\xh} L^2)/(\log 2)$, $G \exp(-G/L) < (\log 2)/(2
          M_{\xh})$ and $2 G M_{\xh} \delta^G < \log 2$
          hold and thus $M_{\xc}^{(G)} \le 2 \Kg$ holds.
          Therefore, one has
          \begin{equation}
            \abs{\coef{\gr^G h}_j} \le 2 \Kg \xc^{- G j}, \quad
            j \in \NN_0.
            \label{gr-h-coef-j}
          \end{equation}
          If $M_{\xh} = 0$, \zcref{gr-h-coef-j} holds trivially.
      \end{enumerate}
    \item (Bounds for $\coef{\gr^G f}$).
      Since $\gr^G$ is a multiplicative homomorphism,
      $\gr^G f = \gr^G q\, \gr^G h$ holds. Therefore,
      \begin{equation}
        \coef{\gr^G f}_j = \sum_{m = 0}^{\min(j, p)} \coef{\gr^G q}_m
        \coef{\gr^G h}_{j - m}.
        \label{gr-f-by-gr-q-gr-h}
      \end{equation}
      \begin{enumerate}[label=(\alph*)]
        \item (Lower bound for $j = p$).
          The reverse triangle inequality, \zcref{gr-q-coef-p},
          \zcref{gr-q-coef-j} and \zcref{gr-f-by-gr-q-gr-h} imply
          \begin{align*}
            \abs{\coef{\gr^G f}_p} \Kg^{-1}
            &\ge \ab(\abs{\coef{\gr^G q}_p} \abs{\coef{\gr^G h}_0}
              - \sum_{j = 0}^{p - 1} \abs{\coef{\gr^G q}_j}
            \abs{\coef{\gr^G h}_{p - j}})\Kg^{-1} \\
            &\ge 1 - 2 \sum_{j = 0}^{p - 1} \binom{p}{j} \xmin^{G(p -
            j)} \xc^{-G(p - j)} \\
            &\ge 1 - 2^{p + 1} \delta^{G},
          \end{align*}
          where the last expression, $1 - 2^{p + 1} \delta^{G}$,
          is positive if $G > (p + 1)
          L \log 2$ holds additionally.
        \item (Upper bounds). By \zcref{gr-q-coef-j},
          \zcref{gr-h-coef-j} and \zcref{gr-f-by-gr-q-gr-h}, one has
          \begin{align*}
            \abs{\coef{\gr^G f}_j} \xeval^{G(j - p)} \Kg^{-1}
            &\le \sum_{m = 0}^{\min(j, p)} \abs{\coef{\gr^G q}_m}
            \abs{\coef{\gr^G h}_{j - m}} \xeval^{G(j - p)} \Kg^{-1} \\
            &\le 2^{p + 1} \sum_{m = 0}^{\min(j, p)} \xmin^{G(p - m)}
            \xc^{G (m - j)} \xeval^{G(j - p)} \\
            &= 2^{p + 1} \sum_{m = 0}^{\min(j, p)}
            (\xmin/\xeval)^{G(p - m)} (\xeval/\xc)^{G (j - m)} \\
            &\le 2^{p + 1} (p + 1) \delta^{G \abs{j - p}}.
          \end{align*}
      \end{enumerate}
  \end{enumerate}
\end{proof}

\begin{theorem}
  \label{graeffe-convergence}
  Let $\rho > 0$, let $f: D_\rho \to \CC, z \mapsto \sum_{j \in
  \NN_0} c_j z^j \not\equiv 0$ be analytic.
  Further assume that $f$ has exactly $p \in \NN_0$ roots
  $\ab(Z_j)_{j = 1}^p$ in $D_\rho$, counting
  multiplicities, where the roots are ordered
  as $\abs{Z_1} \le \abs{Z_2} \le \dots \le \abs{Z_p}$.
  Let $k \in \{0, ..., p - 1\}$, and let $\xeval$ be such that
  $\xmin \coloneq
  \begin{cases} \abs{Z_k} & (k \in \NN), \\ 0 & (k = 0)
  \end{cases} < \xeval < \abs{Z_{k + 1}} =: \xmax$.
  Let $G \in \NN$. Let $\pl_k, \pc_k^n$ be the functions for
  $\gr^G f$ defined as in \zcref{generalized-pellet-infinite} and
  \zcref{generalized-pellet-cauchy}.
  Then the following assertions hold:
  \begin{enumerate}[label=(\arabic*)]
    \item If $G$ is large enough,
      more precisely if
      \[
        G > \max\ab((4 M_{\xh} L^2)/(\log 2), L \log\ab(2^{k + 5}
        (k + 1))),
      \]
      where $\delta \coloneq \max\ab(\xmin/\xeval, (\xeval/\xmax)^{1/3}) < 1$,
      $L \coloneq (-\log \delta)^{-1}$, $\xh \coloneq
      \xeval^{1/3} \xmax^{2/3}$,
      $M_{\xh} \coloneq \max_{\abs{z} = \xh}
      \abs{\log h(z)}$ and $h(z) \coloneq f(z)/(\prod_{j = 1}^{k}
      (z - Z_j))$,
      then $\pl_k (\xeval^G) < 0$.

    \item For any $G$ satisfying the condition in (1), for any
      $M_{\rho}^{(G)} \ge \max_{\abs{z} = \rho^G}
      \abs{(\gr^G f)(z)}$, and for sufficiently large $n$
      (depending on $G$ and $M_{\rho}^{(G)}$),
      $\pc_k^n (\xeval^G) < 0$ holds.
  \end{enumerate}
\end{theorem}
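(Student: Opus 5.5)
The plan is to apply \zcref{gr-coef-bounds} to $f$ with $p$ replaced by $k$ and $\xh \coloneq \xeval^{1/3}\xmax^{2/3}$, and then sum its coefficient bounds inside $\pl_k$.

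\textbf{Setting up the lemma.} Since $\abs{Z_k} < \xh < \abs{Z_{k+1}}$, $f$ has exactly $k$ roots in $D_{\xh}$, so the lemma applies with $p = k$, $\xmin = \abs{Z_k}$ (or $0$). I would take the intermediate radius $\xc \coloneq \xeval^{2/3}\xmax^{1/3}$. Then
\[
\xeval/\xc = \xc/\xh = (\xeval/\xmax)^{1/3},
\]
so the lemma's $\delta$ coincides with the $\delta$ of the statement, and so does $L$. The hypothesis $G > L\log(2^{k+5}(k+1))$ dominates the lemma's requirement $G > L(k+1)\log 2$, since $\log(2^{k+5}(k+1)) \ge (k+1)\log 2$. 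The requirement $G > 4M_{\xh}L^2/\log 2$ is assumed directly.

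\textbf{Part (1).} Evaluate $\pl_k$ at $\xeval^G$ and divide by $\Kg\,\xeval^{Gk} > 0$. (Note $\Kg \neq 0$, since $h$ has no roots in $D_{\xh}$.) Because $\norm{c^{-1}}^{-1} = \abs{c}$ for scalars, the lemma gives
\[
\frac{\pl_k(\xeval^G)}{\Kg\,\xeval^{Gk}} \le \sum_{j\ne k} 2^{k+1}(k+1)\,\delta^{G\abs{j-k}} - \bigl(1 - 2^{k+1}\delta^G\bigr).
\]
The sum over $j \neq k$ is at most $2 \cdot 2^{k+1}(k+1)\,\delta^G/(1-\delta^G)$. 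Since $G > L\log 2$, \zcref{geometric-series-inequality} gives $1/(1-\delta^G) < 2$, so the sum is below $2^{k+3}(k+1)\delta^G$. The whole expression is then negative provided
\[
2^{k+3}(k+1)\delta^G + 2^{k+1}\delta^G < 1.
\]
This follows from $\delta^G < 2^{-(k+5)}(k+1)^{-1}$, which is exactly the hypothesis $G > L\log(2^{k+5}(k+1))$, with room to spare. The main care needed is this bookkeeping of constants, including the $j<k$ and $j>k$ tails, both governed by $\delta^{G\abs{j-k}}$.

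\textbf{Part (2).} Fix $G$. Since $\xeval^G < \rho^G$, the difference is
\[
\fpc_k^n(x) - \pl_k(x) = -\sum_{j\ge n}\abs{\coef{\gr^G f}_j}x^j + M_\rho^{(G)}\sum_{j\ge n}(x/\rho^G)^j .
\]
At $x = \xeval^G$ both tails tend to $0$ as $n\to\infty$: the first by convergence of the power series inside $D_{\rho^G}$, the second as a geometric tail with ratio $(\xeval/\rho)^G < 1$. Also $\pl_k(\xeval^G)$ is a fixed negative number by (1). Hence $\fpc_k^n(\xeval^G) < 0$ for large $n$, and since $1 - \xeval^G/\rho^G > 0$, also $\pc_k^n(\xeval^G) = (1-\xeval^G/\rho^G)\,\fpc_k^n(\xeval^G) < 0$.
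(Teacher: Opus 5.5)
Your proof is correct and follows essentially the paper's argument. For (1) you apply \zcref{gr-coef-bounds} with $p = k$, $\xc = \xeval^{2/3}\xmax^{1/3}$ and $\xh = \xeval^{1/3}\xmax^{2/3}$ and sum the coefficient bounds as the paper does, differing only in how the constants are grouped; for (2) you use the limit $\fpc_k^n(\xeval^G) \to \pl_k(\xeval^G) < 0$, whereas the paper drops the nonpositive tail via $\pt_k^n \le \pl_k$ and derives an explicit threshold on $n$ from $(\xeval/\rho)^G \le \delta^G$, and either suffices for ``sufficiently large $n$''.
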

\begin{proof}
  Let $\xc \coloneq \xeval^{2/3}
  \xmax^{1/3}$, $\delta \coloneq \max\ab(\xmin/\xeval, \xeval/\xc, \xc/\xh,
  \xh/\xmax) = \max\ab(\xmin/\xeval, \sqrt[3]{\xeval/\xmax}) < 1$.

  \begin{enumerate}[label=(\arabic*)]
    \item \zcref{gr-coef-bounds} with $p = k$ implies
      \begin{align*}
        \frac{\pl_k (\xeval^G)}{\xeval^{G k} \Kg}
        &= \sum_{j \in \NN_0 \setminus \{k\}} \frac{\abs{\coef{\gr^G
        f}_j} \xeval^{G(j - k)}}{\Kg} - \frac{\abs{\coef{\gr^G f}_k}}{\Kg} \\
        &\le 2^{k + 1} \ab((k + 1) \sum_{j \in \NN_0 \setminus
        \{k\}} \delta^{G \abs{j - k}} + \delta^G) - 1 \\
        &\le 2^{k + 3} (k + 1) \sum_{j = 1}^\infty \delta^{G j} - 1 \\
        &= 2^{k + 3} (k + 1) \frac{\delta^G}{1 - \delta^G} - 1 \\
        &\le 2^{k + 4} (k + 1) \delta^G - 1 \\
        &< -\frac{1}{2}
      \end{align*}
      if $G > L \log\ab(2^{k + 5} (k + 1))$ additionally. This completes
      the proof for $\pl_k$.
    \item By definition $\pt_k^n (x) \le \pl_k (x)$ holds. Therefore,
      \[
        \fpc_k^n (x) = \pt_k^n (x) + M_{\rho}^{(G)}
        \frac{(x/\rho^G)^n}{1 - x/\rho^G} \le \pl_k (x) +
        M_{\rho}^{(G)} \frac{(x/\rho^G)^n}{1 - x/\rho^G}
      \]
      holds. Since $\xeval < \rho$, one has
      \begin{align*}
        \frac{\fpc_k^n (\xeval^G)}{\xeval^{G k} \Kg}
        &< -\frac{1}{2} + \frac{M_{\rho}^{(G)}}{\xeval^{G k} \Kg}
        \frac{(\xeval^G/\rho^G)^n}{1 - \xeval^G/\rho^G} \\
        &\le -\frac{1}{2} + \frac{2 M_{\rho}^{(G)}}{\xeval^{G k} \Kg}
        (\delta^G)^n \\
        &< 0
      \end{align*}
      if $n > L \ab((\log 4 + \log M_\rho^{(G)})/G - k
      \log \xeval - \log \abs{h(0)})$ holds.
      This completes the proof
      for $\pc_k^n$.
  \end{enumerate}
\end{proof}

The values $\ab(\coef{\gr^G f}_j)_{j = 0}^{n -1}$ may be computed
by applying
\zcref{ck-error} directly to $\gr^G f$. However, it is more efficient
to restrict $G$ to be a power of two,
compute $\ab(c_j)_{j = 0}^{G (n - 1)}$ by applying \zcref{ck-error} to
$f$, and then apply the following theorem $\log_2 G$ times.

\begin{theorem}[\dlg\ iteration~\parencite{householder_dandelin_1959}]
  \label{graeffe-iteration}
  Let $\rho > 0$, let $f: D_\rho \to \CC, z \mapsto \sum_{j \in
  \NN_0} \coef{f}_j
  z^j$ be analytic, let $k \in \NN_0$. Then
  \[
    \coef{\gr^2 f}_k = (-1)^k \ab(\coef{f}_k)^2 + 2 \sum_{j = 0}^{k - 1}
    (-1)^j \coef{f}_j \coef{f}_{2 k - j}.
  \]
\end{theorem}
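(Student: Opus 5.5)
The plan is to compute $\gr^2 f$ directly from the definition with $G = 2$, where $\omega = -1$:
\[
  (\gr^2 f)(z^2) = f(z) f(-z).
\]

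First, I will write $f(z) = \sum_{j} \coef{f}_j z^j$ and $f(-z) = \sum_{j} (-1)^j \coef{f}_j z^j$. Both series converge absolutely for $\abs{z}$ in the disk of analyticity. The Cauchy product therefore gives
\[
  f(z) f(-z) = \sum_{m \in \NN_0} \ab(\sum_{j = 0}^{m} (-1)^j \coef{f}_j \coef{f}_{m - j}) z^m .
\]
Since the left-hand side is even in $z$, the odd coefficients vanish. This can also be checked directly: for odd $m$, pair $j$ with $m - j$, and note that $(-1)^j + (-1)^{m-j} = 0$. Comparing with $(\gr^2 f)(w) = \sum_k \coef{\gr^2 f}_k w^k$ at $w = z^2$, uniqueness of power series coefficients yields
\[
  \coef{\gr^2 f}_k = \sum_{j = 0}^{2k} (-1)^j \coef{f}_j \coef{f}_{2k - j}.
\]

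Second, I will fold this finite sum. The middle term $j = k$ gives $(-1)^k (\coef{f}_k)^2$. For $j < k$, the terms $j$ and $2k - j$ have equal signs, since $(-1)^{2k - j} = (-1)^j$, and equal products $\coef{f}_j \coef{f}_{2k-j}$. Each such pair therefore contributes $2 (-1)^j \coef{f}_j \coef{f}_{2k-j}$, which gives the stated formula.

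There is no real obstacle here. The only point needing care is justifying that $\gr^2 f$ is analytic on $D_{\rho^2}$ with power series obtained by substituting $w = z^2$. This follows from the preceding \dlg\ theorem, or directly because the even function $f(z)f(-z)$ has only even powers in its series.
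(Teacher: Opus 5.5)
Your proof is correct and complete. You write $(\gr^2 f)(z^2) = f(z)f(-z)$, take the Cauchy product, note that the odd coefficients cancel, and fold the symmetric sum $\sum_{j=0}^{2k} (-1)^j \coef{f}_j \coef{f}_{2k-j}$ about $j = k$; this is the standard derivation behind the cited result of Householder, and the paper itself gives no proof beyond that citation (your cancellation of odd powers is the $G = 2$ case of the roots-of-unity argument the paper uses in the proof of \zcref{graeffe-log}).
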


We need to choose the radii $\rdft, \Rdft$.
\zcref{ck-error} implies that choosing $\Rdft \gg \rdft$ seems better,
while it still does not guide the choice of their scales.
However, assuming that constant relative rounding error is introduced
when evaluating $f$, it may be seen from \zcref{dft-definition} that
the higher-order coefficients of $f$ are prone to contamination if
$\rdft$ is small (especially for $\rdft \le 1$).
Therefore, when computing $\coef{\gr^2 f}$ using
\zcref{graeffe-iteration}, in order to avoid the propagation of the
relative error of higher-order coefficients of $f$, $\rdft$ needs to
be chosen large enough depending on the decay rate of the coefficients of $f$.

\section{Numerical examples}
\label{chap-numerical-examples}

The interval enclosures of the roots of $\pc_k^n$ in
\zcref{generalized-pellet-cauchy} are calculated using an interval
arithmetic package, python-flint~\parencite{johansson_arb_2017}.
The precision for the mantissa (\ctxprec) is configured per example.
The upper bound $M_\rho$ is obtained by splitting the circle
$\abs{z} = \rho$ into multiple intervals and evaluating the upper
bound of $\norm{F(z) - \sum_{j = 0}^{n-1} C_j z^j}$ on each interval,
then taking the maximum of these values.
More sophisticated methods are proposed in
\textcite{neher_validated_2001, eble_acetaf_2003}.

\begin{example}
  Let $f(z) \coloneq \sin \pi z = \pi z \prod_{m \in \NN} (1 -
  z^2/m^2)$.
  Then the function $f$ has roots at each $z \in \ZZ$, and all
  of them have multiplicity one.
  The parameters are set to $\rho = 10, \ndft=16384$, $\rdft = 4$
  and $\Rdft=100$.
  The Pellet bounds for $\gr^G f$ are shown in
  \zcref{table-sin-n}, \zcref{table-sin-graeffe}, \zcref{table-sin-k}.
  Only odd values of $k$ are considered, since no Pellet bounds
  can be obtained for even $k$ due to the symmetry of the roots of $f(z)$.
  Here, the lower bounds are rounded up to five decimal places,
  and the upper bounds are rounded down to five decimal places.
  From \zcref{table-sin-n} and \zcref{table-sin-graeffe}, one can see
  that increasing $G$ and $n$ improves the Pellet bounds, while
  from \zcref{table-sin-k}, one can see that the Pellet
  bounds become worse as $k$ increases for fixed $G$ and $n$. While
  \zcref{graeffe-convergence} provides sufficient but not
  necessary conditions on $n$
  and $G$, this behavior looks consistent with the theorem.
\end{example}

\ifpreprint\begin{table}[ht]\else\begin{table}[pos=ht]\fi
  \caption{Pellet bounds for $\sin \pi z$ with $k = 1$, $G =
  1$, $\ctxprec=64$ and $n \in \{4,8,16,32\}$.}
  \centering
  \begin{filecontents*}{table1.csv}
n,start,end
3,0.00001,0.00111
7,0.00001,0.19975
15,0.00001,0.69306
31,0.00001,0.69306
\end{filecontents*}
\csvreader[
    tabular=ccc,
    table head=\toprule $n$ & lower bound & upper bound \\ \midrule,
    table foot=\bottomrule,
  ]{table1.csv}{}{
  \csvcoli & \csvcolii & \csvcoliii}
  \label{table-sin-n}
\end{table}

\ifpreprint\begin{table}[ht]\else\begin{table}[pos=ht]\fi
  \caption{Pellet bounds for $\sin \pi z$ with $k = 3$, $n =
    32$, $\ctxprec=1024$ and $G \in \{4, 8, 16, 32\}$.
  Pellet bounds were not obtained for $G = 4$.}
  \centering
  \begin{filecontents*}{table2.csv}
G,start,end
4,not found,not found
8,1.11732,1.77671
16,1.05664,1.89254
32,1.02793,1.94566
\end{filecontents*}
\csvreader[
    tabular=ccc,
    table head=\toprule $G$ & lower bound & upper bound \\ \midrule,
    table foot=\bottomrule,
  ]{table2.csv}{}{
  \csvcoli & \csvcolii & \csvcoliii}
  \label{table-sin-graeffe}
\end{table}

\ifpreprint\begin{table}[ht]\else\begin{table}[pos=ht]\fi
  \caption{Pellet bounds for $\sin \pi z$ with $G = 128$, $n =
  32$, $\ctxprec=1024$ and $k \in \{1, 3, 5, 7\}$.}
  \centering
  \begin{filecontents*}{table3.csv}
k,start,end
1,0.00001,0.99313
3,1.00691,1.98627
5,2.01382,2.97941
7,3.02073,3.97255
\end{filecontents*}
\csvreader[
    tabular=ccc,
    table head=\toprule $k$ & lower bound & upper bound \\ \midrule,
    table foot=\bottomrule,
  ]{table3.csv}{}{
  \csvcoli & \csvcolii & \csvcoliii}
  \label{table-sin-k}
\end{table}

\begin{example}[{\normalfont\texttt{hadeler}}
  problem~\parencite{hadeler_mehrparametrige_1967, betcke_nlevp_2013}]
  \label{hadeler-problem}
  Let $l \in \NN$, $\alpha > 0$.
  The Hadeler problem is the problem of finding $z \in \CC$ such
  that $\det F(z) = 0$, where
  \[
    F(z) := \ab((\exp z - 1) B + z^2 A_2 - A_0)
  \]
  and
  \[
    (B)_{p q} \coloneq (l + 1 - \max\ab(p,
    q)) p q, \quad (A_2)_{p q} \coloneq l
    \delta_{p q} + \frac{1}{p + q}, \quad A_0
    \coloneq \alpha I_{l}.
  \]

  The default parameters in NLEVP~\parencite{betcke_nlevp_2013}, $l =
  8$ and $\alpha = 100$,
  were used.
  The function
  $F$ was reimplemented using python-flint to support
  interval arithmetic.
  The parameters were set to $\rho = 10, \ndft=8192$, $\rdft =
  4$ and $\Rdft=100$.
  The 1-norm was used as the matrix norm.
  The obtained Pellet bounds for $F(c + \cdot)$ with $k \in \{0, 1\}$ and
  $\gr^G \det F(c + \cdot)$ with $G = 2^3$ and $k \in \{0,1,2,3,4\}$, with
  centers $c \in \ab\{x + y i \mid x, y \in \{-2, 0, 2\}\}$,
  $\ctxprec=128$ and $n=50$ are shown in
  \zcref{fig-hadeler-bounds}.
  For $F(c + \cdot)$ with $k = 1$, no Pellet bounds were
  obtained for any $c$, while for $\gr^G \det F(c + \cdot)$ with $k \in
  \{1, 2,3,4\}$, Pellet bounds were successfully obtained for some $c$.
  The Pellet bounds for $\gr^G \det F(c + \cdot)$ were tighter than
  the Pellet bounds for $F(c + \cdot)$.
\end{example}

\ifpreprint\begin{figure}[ht]\else\begin{figure}[pos=ht]\fi
  \centering
  \includegraphics[width=13cm]{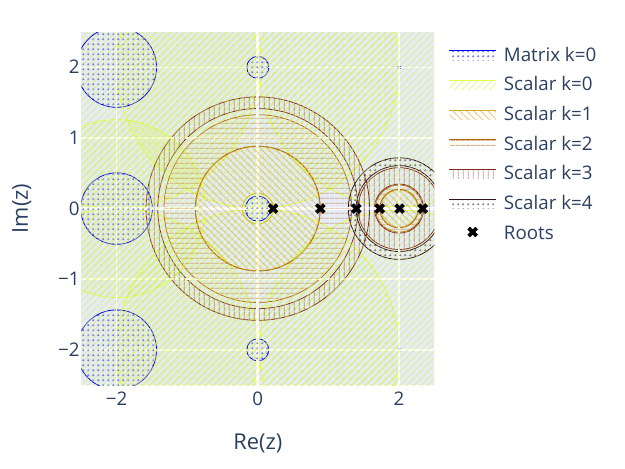}
  \caption{Pellet bounds for the Hadeler problem with $l = 8$
    and $\alpha = 100$ plotted in $\{x + y i \mid x, y \in [-2.5, 2.5]\}$.
    Blue dotted regions are the Pellet bounds obtained by
    matrix Pellet's theorem with Cauchy's estimate for $k = 0$.
    Other regions are the Pellet bounds obtained by scalar
    Pellet's theorem with Cauchy's estimate for $\gr^G (\det F)$ for
  $G = 2^3$ and $k \in \{0, 1, 2, 3, 4\}$.}
  \label{fig-hadeler-bounds}
\end{figure}

\section*{Acknowledgements}

The authors would like to thank Dr.\ Atsushi Nakayasu for their
valuable comments.
This work is supported by "Joint Usage/Research Center for
Interdisciplinary Large-scale Information Infrastructures (JHPCN)"
and "High Performance Computing Infrastructure (HPCI)" in Japan
(Project ID: jh260048).
This research was conducted using the Supermicro ARS-111GL-DNHR-LCC
and FUJITSU Server PRIMERGY CX2550 M7 (Miyabi) at Joint Center for
Advanced High Performance Computing (JCAHPC).
This work was supported in part by JSPS KAKENHI Grant Number JP23H03800 and 25H01108, and JST FOREST Program Grant Number JPMJFR202J.

\section*{Data availability}

The code used to conduct this study is available
at~\parencite{itoh_2026_23094309}.

\section*{Declaration of generative AI and AI-assisted technologies
in the manuscript preparation process}

During the preparation of this work, the authors used GitHub Copilot
and OpenCode Go in
order to correct grammatical errors and convert the manuscript to
LaTeX. After using these services, the authors reviewed and edited the
content as needed and take full responsibility for the content of the
published article.

\printbibliography

\end{document}